     \newcommand{\BF}{{\mathbb {F}}}
    \newcommand{\BI}{{\mathbb {I}}}
    \newcommand{\BQ}{{\mathbb {Q}}}
     \newcommand{\BZ}{{\mathbb {Z}}}
    \newcommand{\fm}{{\mathfrak{m}}} 
     \newcommand{\fp}{{\mathfrak{p}}}
    \newcommand{\ab}{{\mathrm{ab}}}\newcommand{\Ad}{{\mathrm{Ad}}}
    \newcommand{\Br}{{\mathrm{Br}}}
    \newcommand{\Cor}{{\mathrm{Cor}}}
    \newcommand{\End}{{\mathrm{End}}}
    \newcommand{\Gal}{{\mathrm{Gal}}} \newcommand{\GL}{{\mathrm{GL}}}
    \newcommand{\Hom}{{\mathrm{Hom}}}
    \renewcommand{\Im}{{\mathrm{Im}}}
    \newcommand{\Ind}{{\mathrm{Ind}}}
    \newcommand{\ord}{{\mathrm{ord}}}
    \renewcommand{\mod}{\ \mathrm{mod}\ }
    \newcommand{\Res}{{\mathrm{Res}}}
    \newcommand{\tor}{{\mathrm{tor}}}
    \newcommand{\ur}{{\mathrm{ur}}}
    \font\cyr=wncyr10
    \newcommand{\Sha}{\hbox{\cyr X}}
    \newcommand{\ov}{\overline}
    \theoremstyle{plain}
    \newtheorem{thm}{Theorem}[section] \newtheorem{cor}[thm]{Corollary}
    \newtheorem{lem}[thm]{Lemma}  \newtheorem{prop}[thm]{Proposition}
\theoremstyle{remark} \newtheorem{remark}[thm]{Remark}
\theoremstyle{remark} 
\theoremstyle{remark} 
    \numberwithin{equation}{section}
\begin{document}
\title{Ordinary deformations are unobstructed in the cyclotomic limit
}

\author{Ashay Burungale and Laurent Clozel }
\address{Ashay A. Burungale:  California Institute of Technology,
1200 E California Blvd, Pasadena CA 91125  And
The University of Texas at Austin, Austin, TX 78712, USA.} 
\email{ashayburungale@gmail.com}

\address{Laurent Clozel: Math\'ematiques Universit\'e Paris-Sud 91405 Orsay France} \email{laurent.clozel@math.u-psud.fr}

\begin{abstract}
The deformation theory of ordinary  representations of the absolute Galois groups of totally real number fields (over a finite field $k$) has been studied for a long time, starting with the work of Hida, Mazur and Tilouine, and continued by Wiles and others. Hida has studied the behaviour of these deformations when one considers the $p$-cyclotomic tower of extensions of the field. In the limit, one obtains a deformation ring $R_\infty$ classifying the ordinary deformations of the (Galois group of) the $p$-cyclotomic extension. 
We show that if $R_\infty$ is Noetherian 
and certain adjoint $\mu$-invariants vanish (as is often expected), then $R_\infty$ is  free over the ring of Witt vectors of $k$.

\end{abstract}
\maketitle
{
\hypersetup{linkcolor=black}
\tableofcontents
}

\section{Introduction}\label{s:Intro}

\subsection{Setup}
Let $p$ be an odd prime. Let $F$  be a totally real field of degree $d$ over $\mathbb{Q}$, unramified at $p$. All extensions of $F$ are contained in a fixed algebraic closure.  Let $F_{\infty}$ be the cyclotomic $\mathbb{Z}_p$-extension of $F$, and $F_n  \subset F_{\infty}$ the subextension of degree $p^n$. Thus $F_0=F$. Note that $F$ (and therefore $F_n$) does not contain the $p$-th roots of unity.

We write $\mathfrak{p}$ for a prime of $F$ dividing $p$. 
Since $F$ is unramified at $p$, we have

\begin{itemize}
\item[(ram)]   $F_n/F_0$ is totally ramified at $\mathfrak{p}$.
\end{itemize}
\vskip2mm

\vskip2mm

Let $S$ be a finite set of places of $F$, containing the infinite and $p$-adic places,  
and let $F_S$ be the maximal extension of $F$ unramified outside $S$; 
ditto $F_{n,S}$. We define $\Gamma_0 = \Gal (F_S /F)$ and similarly $\Gamma_n = \Gal( F_{n,S} / F_n)$.
\vskip2mm

In this setting, given an ordinary residual representation $\bar{\rho} :\Gamma_0 \rightarrow \GL_{2}(k)$ for $k$ a finite field of characteristic $p$ (cf. \S\ref{ss:ord}) one has the ordinary deformation ring $R_n$ 
of   $\bar{\rho} \vert _{\Gamma_n}$, 
classifying weight two ordinary deformations of   $\bar{\rho} \vert _{\Gamma_n}$ 
unramified outside $S$. It has been first studied by Hida \cite{Hi}.  One expects the size of $R_n$ to grow as $n\rightarrow \infty.$ We can form the inverse limit $R_{\infty} =  \varprojlim R_n.$  Suitably interpreted (below), it is the ordinary deformation ring of $\bar{\rho} \vert _{F_{\infty}}$. Our goal is to show that, under certain natural assumptions, such ordinary deformations are unobstructed: 
 $$R_{\infty} \cong W(k)[\![X_1, \dots, X_s]\!]$$ 
 for $W(k)$ the Witt ring and $s\geq 1$ an integer. 
Theorem \ref{main} is our main result. The assumptions are  $R_{\infty}$ is Noetherian, and certain adjoint $\mu$-invariants vanish (see \S\ref{ss:WL-II}).

In general, the obstructions are measured by the second adjoint Galois cohomology. 
Note that the $p$-cohomological dimension of $F_{\infty}$ is 1, cf. Serre \cite[Ch.2, Prop. 9]{Se1}. (Recall that primes of $F$ over $p$ are totally ramified in $F_{\infty}$, and that primes not dividing $p$ are inert, at least after a finite extension $F_n$ of $F$.) So, without the `ordinary' condition, the deformations are unobstructed 
 over $F_\infty$. 
 The corresponding deformation ring is however non-Noetherian. 
In contrast the ordinary deformation ring $R_\infty$ is expected to be often Noetherian and well-controlled (cf.~Hida's non-abelian Leopoldt conjecture \cite{Hi-LC}).  
To investigate whether it is smooth, one needs appropriately to account for the ordinary condition, which could yield obstructions. Much of our work will consist in proving the vanishing of the relevant $H^2$'s over $F_\infty$. There will be two main steps: a calculation of tangent spaces for infinite level local deformation problems (cf.~section~\ref{s:local})and a weak Leopoldt-type result (cf.~section~\ref{s:WL_ad}). 
The latter relies on the finiteness of the adjoint Bloch-Kato Selmer groups over $F_n$ (due to Allen ~\cite{Al}), and is also closely related to 
 the adjoint $\mu$-invariants.

\subsection{Context} 
Following Hida's discovery of $p$-adic families of modular forms (cf.~\cite{Hi'},~\cite{Hi0}), Mazur \cite{Ma0} introduced 
Galois deformation theory 
in the mid 80's. 
It has a rich history (cf.~\cite{Wi}), and continues to be fundamental to the study of Galois representations 
and their arithmetic. 
Iwasawa theory of deformation rings was initiated by Hida in the late 90's 
(cf.~\cite{Hi2},~\cite{Hi}).
It arose in the context of Iwasawa theory of the adjoint of a $p$-adic family of modular forms. 

The problem of the growth of deformation rings in the cyclotomic tower has been posed by Hida 
\cite[pp.~354--357]{Hi}. 
He proved that the vanishing of an adjoint $\mu$-invariant implies $R_\infty$ is Noetherian (cf.~\cite[Cor.~5.11]{Hi}).
The mysterious invariant $s\geq 1$ encodes the growth. In \cite{BCM} we will provide examples with $s >1$ for $\bar{\rho}$ verifying suitable conditions, and for a large set of ramification $S$.
One may seek arithmetic significance of the invariant $s$, such as its link with the adjoint Iwasawa theory. It is especially instructive to consider the residually CM case, which may lead to link with CM Iwasawa theory (cf.~\cite{MT},~\cite{HT}). 
Another basic problem is to explore connections with infinite level modular forms introduced in \cite{Cl1}, \cite{Cl2}.

As for the assumptions in our main theorem, 
it is expected that the $\mu$-invariant typically vanishes 
if the underlying Galois representation is residually irreducible (cf.~\cite{Su}). We are not aware of any general result towards it.
 Nevertheless, Remark \ref{Ex} (2)  
  presents some examples which illustrate the main theorem. The vanishing of the $\mu$-invariant seems critical (following Perrin-Riou) for Proposition \ref{mod p-WL-abs}.

We may ask\footnote{Tilouine and Urban have recently announced such a generalisation.} if the main result can be proved for $^{L}G$-valued deformations of a $^{L}G$-valued mod $p$ Galois representation with $G$ a reductive group. To follow the current approach, it seems essential to impose adequacy for the image of the mod $p$ Galois representation and suppose the vanishing of certain adjoint $\mu$-invariants. We remark that a key input in the current approach due to Allen \cite{Al} is already available for $G=\GL_{d}$.

\vskip2mm

\textit{Acknowledgements}

\vskip2mm

This work was begun by one of us (LC) in 2015, in collaboration with Akshay Venkatesh. Although he contributed a large part of its content, Venkatesh has declined to sign the final version. We wish to thank him for the impetus to this work. 

We are grateful to the referee for valuable comments and suggestions. 
We also thank Patrick Allen, Gebhard Boeckle, Haruzo Hida, Chandrashekhar Khare, Barry Mazur, Richard Taylor and Jacques Tilouine for helpful exchanges.
\vskip5mm

\textit{Notations}

\vskip2mm

Let $F_{n,\mathfrak{p}}$ be the localisation of $F_n$  at the unique prime above $\mathfrak{p}$. When $\mathfrak{p}$ is understood we will write $K_n := F_{n,\mathfrak{p}}$. Thus $[K_n : K_0] = p^n$.

We set $\Delta_n= \Gal(F_n /F) \cong \mathbb{Z}/p^n \mathbb{Z} $ and $\Delta_\infty = \varprojlim \Delta_n \cong \mathbb{Z}_p$. Also put
$$ \Omega = \varprojlim k[\Delta_n] \cong k[\![T]\!]$$
for the (modular) Iwasawa algebra, where $k$ is a finite field of characteristic $p$, and
$$ \Lambda = \varprojlim \mathbb{Z}_p[\Delta_n] \cong  \mathbb{Z}_p [\![T]\!]$$

If  V is a $k$-vector space we write $V^*$ for its linear dual.

If  $L$ is a perfect field, we write $G_L$ for its absolute Galois group (for a choice of an algebraic closure).

\vskip2mm

\subsection{Ordinarity}\label{ss:ord}

Let  $K$ be a $p$-adic field, $k$ its residue field, and $A$ a local $W(k)$-algebra. A representation $\rho: G_K \rightarrow  \GL_{2}(A)$ is called \textit{ordinary of weight two} if it has the form

\begin{equation}\label{ordinary_def}  \left( \begin{array}{cc} \omega  \varepsilon & * \\ 0 & \varepsilon^{-1} \end{array} \right)\end{equation}
where $\varepsilon : G_K \rightarrow A^{\times}$ is unramified, $\bar{\varepsilon}^2 \neq 1$ for $\ov{\varepsilon}:=\varepsilon \mod \fm_{A}$, and
$$\omega: G_K \rightarrow \mathbb{Z}_p^{\times} \rightarrow A^{\times}$$
is the cyclotomic character. (Actually, $\bar{\varepsilon}^{2}\neq 1$ is an additional hypothesis, often referred to as the $p$-distinguished hypothesis.)

We will write $A[\chi]$ for the free $A$-module of rank 1 on which $G_K$ acts by the character $\chi$. The coefficient $*$ defines a class $e \in \mathrm{Ext}_K^1(A[\varepsilon^{-1}], A[\omega \varepsilon]) = H^1(K, A[\omega \varepsilon^2]).$  

For a global field $F$, a representation $\rho$ of the Galois group into $\GL_{2}(A)$ is called \textit{ordinary of weight two} if its restriction to $F_v$ (for any prime $v$ above $p$) is ordinary of weight two. We also assume that the determinant of $\rho$ is the cyclotomic character. 

We will consider representations of $\Gamma_n$, thus unramified outside $S$.
 For the places in $S$ away from $p$, we impose no conditions (`unrestricted deformations'.) (We could impose local conditions, given by compatible deformation data $(\mathcal{D}_{n,\mathfrak{q}})$ for the primes $\mathfrak{q}$ dividing $ S\setminus \{\fp|p\}$, the conditions being compatible with respect to the field extensions. However it seems delicate to check the arguments of \S\ref{s:WL_ad} in this more general situation.)

\vskip2mm

\vskip2mm
Let $k$ be a finite field of characteristic $p$. 
Let $\bar{\rho}: \Gamma_0 \rightarrow \GL_{2}(k)$
be an absolutely irreducible representation satisfying the following. 
\begin{itemize}
\item[(ord)] $\bar{\rho}$ is ordinary of weight 2.
\item[(irr$_{F(\zeta_{p})}$)] $\bar{\rho}|_{G_{F(\zeta_{p})}}$ is irreducible. 
\item[(NS)] The restriction of $\bar{\rho}$ to $ F_{\mathfrak{p}}$ is absolutely indecomposable\footnote{See Remark \ref{Ex} (3) for the general case.} for all $ \mathfrak{p}$.
\item[(det)] The determinant is the cyclotomic character. 
\end{itemize}
\vskip2mm

\vskip2mm
In particular  $\bar{\rho}$ is totally odd (the image of each complex conjugation has determinant $-1$).

Note that these conditions remain satisfied when  $\bar{\rho}$ is restricted to $F_n$: $\varepsilon^2$ remains non-trivial as 
$F_{n, \mathfrak{p}} / F_ {\mathfrak{p}}$ is totally ramified, and then inflation-restriction implies that $H^1(K, k[\omega \varepsilon^2]) \rightarrow H^1(K_n, k[\omega\varepsilon^2])$ is injective ($K=F_{\mathfrak{p}} \subset K_n= F_{n, \mathfrak{p}}$). In particular, for all $n$, $\bar{\rho} \vert _{G_{K_n}}$ is indecomposable. The same argument applies to the restriction to  $G_{K_n(\zeta_p)}$.Thus $\bar{\rho}$, restricted to $G_{K_n(\zeta_p)}$, is semi-simple by Clifford theory (\cite[Thm. 1.1]{CR}) and indecomposable, and therefore irreducible.
In this paragraph and henceforth, we let $\varepsilon=\ov{\varepsilon}_\fp$, the latter as in \eqref{ordinary_def} for $\ov{\rho}$ and $\omega$ also denotes the mod $p$ cyclotomic character of $G_{F_\fp}$.

Write $\widehat{\mathcal{C}}_W$  for the category of complete local $W$-rings ($W=W(k)$) with residue field $k$; write ${\mathcal{C}}_W$ for the subcategory of \textit{Artinian} objects in $\widehat{\mathcal{C}}_W$. (Cf.~\cite[p.~267]{Ma}. Note however that  we do not assume rings in $\widehat{\mathcal{C}}_W$ to be Noetherian.) We simply write $\Hom(-,-)$ for the \textit{continuous} homomorphisms in  $\widehat{\mathcal{C}}_W$.  For the representability properties it suffices to consider liftings of $\bar{\rho}$ to elements of ${\mathcal{C}}_W$.

For any non-negative integer $n$, there exists a universal deformation ring $R_n$ over $W(k)$, the \textit{ordinary deformation ring} for $F_n$ parametrising ordinary liftings (of weight 2) of $\bar{\rho}$ over algebras in  ${\mathcal{C}}_W$. By results which are now well-known, we have

\begin{thm}\label{CN}
$R_n$ is a complete Noetherian algebra in $\widehat{\mathcal{C}}_W$ for finite $n$. 
\end{thm}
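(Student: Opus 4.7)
The proof is the classical Mazur--Hida deformation-theoretic argument; I would sketch the two ingredients.

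\emph{Representability.} Since $\bar\rho$ is absolutely irreducible and (as noted just after the conditions list) remains so on $\Gamma_n$, Mazur's theorem yields a universal deformation ring in $\widehat{\mathcal{C}}_W$ for the unrestricted functor of deformations of $\bar\rho|_{\Gamma_n}$ with cyclotomic determinant. To pass to the ordinary functor, one verifies that the ordinary condition at each $\mathfrak{p}\mid p$ is a \emph{deformation condition} in Mazur's sense, i.e.\ is stable under subobjects, fibre products, and small surjections. The decisive input is (NS): the residual representation $\bar\rho|_{G_{K_n}}$ is absolutely indecomposable, so it carries a unique $G_{K_n}$-stable line. A step-by-step lifting argument along small surjections $A' \twoheadrightarrow A$ in $\mathcal{C}_W$ then shows that the ordinary filtration lifts \emph{uniquely}; hence ordinarity is preserved by all the required operations, and the ordinary subfunctor is representable by a quotient $R_n$ of the unrestricted universal ring.

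\emph{Noetherianity.} By the standard cocycle calculation, the mod-$\mathfrak{m}_W$ tangent space $t_{R_n}=\Hom_{W(k)}(R_n,k[\varepsilon])$ is identified with the Selmer group
\[
H^{1}_{\ord}(\Gamma_n,\ad^{0}\bar\rho) \;=\; \ker\!\Bigl(H^{1}(\Gamma_n,\ad^{0}\bar\rho) \lra \bigoplus_{\mathfrak{p}\mid p} H^{1}(G_{K_n},\ad^{0}\bar\rho)/L_{n,\mathfrak{p}}\Bigr),
\]
where $L_{n,\mathfrak{p}}$ is the local tangent subspace cut out by the ordinary filtration. The ambient $H^{1}(\Gamma_n,\ad^{0}\bar\rho)$ is finite: $\Gamma_n = \Gal(F_{n,S}/F_n)$ is the Galois group of a number field unramified outside the finite set $S\cup\{\mathfrak{p}\mid p\}$, $\ad^{0}\bar\rho$ is a finite module, and one invokes the classical finiteness theorem for Galois cohomology of number fields with finite coefficients. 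Therefore $t_{R_n}$ is finite-dimensional over $k$, and Schlessinger's criterion realises $R_n$ as a quotient of $W(k)[[X_1,\ldots,X_r]]$ with $r=\dim_k t_{R_n}$. In particular $R_n$ is complete local Noetherian with residue field $k$, hence an object of $\widehat{\mathcal{C}}_W$.

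\emph{Main obstacle.} The only non-routine step is confirming that ordinarity is a genuine deformation condition, and this rests entirely on the uniqueness of the ordinary filtration in lifts, which (NS) supplies via the absolute indecomposability of $\bar\rho|_{G_{K_n}}$. Everything else combines Mazur--Schlessinger with the finiteness of global Galois cohomology; since the theorem is explicitly invoked as ``well-known'', I would expect the paper to cite \cite{Hi} (or Mazur's Boston conference article) rather than reproduce the argument.
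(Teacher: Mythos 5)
Your proposal is correct and is precisely the standard Mazur--Hida--Schlessinger argument that the paper is alluding to: the paper itself offers no proof, stating only ``By results which are now well-known,'' so your reconstruction (representability via absolute irreducibility of $\bar\rho|_{\Gamma_n}$, ordinarity being a deformation condition thanks to the unique stable line supplied by (NS), and Noetherianity via finiteness of $H^1(\Gamma_n,\Ad^0\bar\rho)$ plus Schlessinger) is exactly the argument the authors defer to. You also correctly anticipated that the paper would simply cite this as folklore rather than spell it out.
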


\subsection{Deformation rings over $F_{\infty}$}

\vskip2mm

By construction, for $A \in \widehat{\mathcal{C}}_{W}$, there exists a natural bijection  
$$\Hom(R_n,A) \leftrightarrow \rho_A^n = \{\text{ordinary deformations of $\ov{\rho}|_{\Gamma_{n}}$ over $A$}\}$$
(the representations on the right taken modulo conjugation by $1+ \mathfrak{m}_A M_2(A)$). 

By restriction $\rho_A^n$ yields an ordinary representation for $\Gamma_{n+1}$. Taking $A=R_n$ we see that there exists a natural homomorphism $R_{n+1} \rightarrow R_n$.

\begin{lem}\label{sur}
The homomorphism $R_{n+1} \rightarrow R_n$ is surjective.
\end{lem}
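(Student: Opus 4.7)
By Theorem~\ref{CN}, both $R_n$ and $R_{n+1}$ are Noetherian objects of $\widehat{\mathcal C}_W$, so the standard Nakayama criterion applies: the $W$-algebra homomorphism $R_{n+1}\to R_n$ is surjective iff the induced map on Zariski tangent spaces $t_{R_n}\to t_{R_{n+1}}$ is injective. Under the usual dictionary these tangent spaces parametrize ordinary deformations of $\bar\rho|_{\Gamma_n}$ (resp.\ $\bar\rho|_{\Gamma_{n+1}}$) to $k[\epsilon]/(\epsilon^{2})$ modulo strict equivalence, and the tangent map is the restriction $\rho\mapsto\rho|_{\Gamma_{n+1}}$. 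I therefore need to show that if an ordinary infinitesimal deformation of $\bar\rho|_{\Gamma_n}$ becomes trivial after restriction to $\Gamma_{n+1}$, it was already trivial.

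Hypothesis (NS) says $\bar\rho|_{G_{K_0}}$ is a non-split extension of the (automatically distinct, since one is ramified and the other unramified) characters $\omega\varepsilon$ and $\varepsilon^{-1}$; by the inflation-restriction argument already recorded in the text just before the lemma (using (ram) and $\varepsilon^{2}\neq 1$), the same holds over every $K_n=F_{n,\mathfrak p}$. Consequently
\[
\End_{k[\Gamma_n]}(\bar\rho)\hookrightarrow \End_{k[G_{K_n}]}(\bar\rho)=k
\]
for every $n\ge 0$. Standard unframed deformation theory then identifies $t_{R_n}$ with a $k$-subspace $H^1_{\mathrm{ord}}(\Gamma_n,\ad^0\bar\rho)\subseteq H^1(\Gamma_n,\ad^0\bar\rho)$, cut out by local ordinary conditions at the primes above $p$, and the tangent map with the natural cohomological restriction. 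It therefore suffices to show that the restriction map $H^1(\Gamma_n,\ad^0\bar\rho)\to H^1(\Gamma_{n+1},\ad^0\bar\rho)$ is itself injective.

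Apply inflation-restriction to $\Gamma_{n+1}\triangleleft\Gamma_n$ with cyclic quotient $\Gal(F_{n+1}/F_n)\cong\BZ/p\BZ$: the kernel of the restriction map equals $H^1\bigl(\Gal(F_{n+1}/F_n),\,(\ad^0\bar\rho)^{\Gamma_{n+1}}\bigr)$. Since $(\ad\,\bar\rho)^{\Gamma_{n+1}}=\End_{k[\Gamma_{n+1}]}(\bar\rho)=k\cdot I$ and $p>2$, the trace map splits $\ad\,\bar\rho=k\cdot I\oplus\ad^0\bar\rho$ as $\Gamma_{n+1}$-modules, and hence $(\ad^0\bar\rho)^{\Gamma_{n+1}}=0$; the kernel vanishes. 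The only substantive step—and so the main obstacle if one tried to relax the hypotheses—is the persistence of scalar endomorphisms $\End_{k[\Gamma_{n+1}]}(\bar\rho)=k$ along the tower; everything else is formal, being the inflation-restriction sequence together with the standard tangent-space dictionary for unframed ordinary deformations.
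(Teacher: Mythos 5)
Your proof is correct and follows essentially the same route as the paper's: both reduce surjectivity of $R_{n+1}\to R_n$ to injectivity of the restriction $H^1_{\ord}(\Gamma_n,\Ad^0\bar\rho)\to H^1_{\ord}(\Gamma_{n+1},\Ad^0\bar\rho)$, which in turn follows from inflation--restriction and the vanishing of $(\Ad^0\bar\rho)^{\Gamma_{n+1}}$ (itself coming from local indecomposability via (NS)). The only point the paper treats more carefully is that $\Gamma_{n+1}=\Gal(F_{n+1,S}/F_{n+1})$ is not \emph{a priori} a normal subgroup of $\Gamma_n=\Gal(F_{n,S}/F_n)$; the paper first shows $H^1(\Gamma_{n+1},W)\cong H^1(\Gal(F_{n,S}/F_{n+1}),W)$ via the auxiliary group $G=\Gal(F_{n+1,S}/F_{n,S})$ before applying inflation--restriction, whereas you silently use $F_{n,S}=F_{n+1,S}$ — which is in fact true here, so this is only a presentational difference.
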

\begin{proof}
We have the tangent spaces
$$(\mathfrak{m}_R/(p, \mathfrak{m}_{R}^2))^{*} = H^1_{\ord}(\Gamma, \Ad^0 \bar{\rho}) $$
where $\Ad^0 \bar{\rho}$ is the representation of $\Gamma$ on the traceless endomorphisms of the space of $\bar{\rho}$ (see \S\ref{s:local})
for $\Gamma = \Gamma_n, \Gamma_{n+1}$, and $R= R_n, R_{n+1}$
 (cf.~\cite{CHT}). 
 The definition of $H^1_{\ord}$ is recalled in \S\ref{ss:tg}.

\vskip2mm

  Note that  $F_{n+1,S} = F_{n,S}$ . Consider the exact sequence
 $$1 \rightarrow \Gamma_{n+1} \rightarrow \Gamma_n \rightarrow \Delta_{n,n+1} \rightarrow 1$$
  where $\Delta_{n,n+1} = \Gal(F_{n+1}/F_n)$. This yields the exact sequence
$$0 \rightarrow H^1(\Delta, H^0(\Gamma_{n+1},W)) \rightarrow H^1(\Gamma_n,W) \rightarrow H^0(\Delta, H^1(\Gamma_{n+1},W)).$$

Since the representation of $\Gamma_{n+1}$ on $V$ is indecomposable, $H^0(\Gamma_{n+1},W)=0$, whence an exact sequence
\begin{equation}
0 \rightarrow H^1(\Gamma_n, \Ad^0 \bar{\rho}) \rightarrow H^1(\Gamma_{n+1}, \Ad^0 \bar{\rho} )  
\end{equation}

\vskip2mm

Now the definition of ordinary cohomology (see \S\ref{ss:tg}) yields a commutative diagram

\vskip6mm
$\begin{tikzcd}[row sep=2.5em]
& 
H^1_{\ord}(\Gamma_n,W)  \arrow[r,] \arrow[d,]&
H^1_{\ord}(\Gamma_{n+1},W)           \arrow[d,]\\
0 \arrow[r,]&
H^1(\Gamma_n,W) \arrow[r,]&
H^1(\Gamma_{n+1},W)\\
\end{tikzcd}$

(the local conditions defining $H^1_{\ord}$ being compatible), with injective vertical maps, whence

\begin{equation}
0 \rightarrow H^1_{\ord}(\Gamma_n, \Ad^0 \bar{\rho}) \rightarrow H^1_{\ord}(\Gamma_{n+1}, \Ad^0 \bar{\rho} )  
\end{equation}

This yields first $R_{n+1} \otimes k \twoheadrightarrow R_n \otimes k$ since these algebras are Noetherian and complete, and then  $R_{n+1} \twoheadrightarrow R_n  $ as both algebras are $p$-complete.
\end{proof}
\vskip2mm

Now we define $$R_\infty=  \varprojlim R_n.$$ It belongs to $\widehat{\mathcal{C}}_W$. It is \textit{not} known to be Noetherian. (Compare \cite[pp. 354-357]{Hi}.)

We now  want to consider ordinary deformations of $\bar{\rho}\vert_ {F_\infty}$. First note that $\bar{\rho}\vert_{\Gal(F_{\infty,S}/F_{\infty})}$ remains ordinary of weight $2$ (with the previous definitions); in particular $\varepsilon^2 \neq 1$ on this subgroup. The exact sequence
$$1 \rightarrow \Gal(\bar{K}/K_\infty) \rightarrow \Gal(\bar{K}/K) \rightarrow \Delta \rightarrow 1$$
where $\Delta \cong \BZ_p$, yields again
$$0 \rightarrow H^1(\Delta, H^0(\bar{K}/K_\infty, k)) \rightarrow H^1(\bar{K}/K, k) \rightarrow H^0(\Delta, H^1(\bar{K}/K_\infty, k))$$
where $k$ is endowed with the representation $\omega \varepsilon^2$, so the class of $e$ in $H^1(\bar{K}/K_\infty,k)$ is non-zero as the first term vanishes ($\omega \varepsilon^2$ being equal to $\varepsilon^2$ on the subgroup).

\vskip2mm

However standard deformation theory does not seem to apply here. Indeed:
\begin{itemize}
\item[(i)] The group $\Pi= \Gal(F_S/F_\infty)$ does not satisfy the usual finiteness condition, viz., $\Hom(\Pi, \BZ/p \BZ)$ being finite. In fact all we seem to know is that $\Pi^{\ab}$ is finitely generated over the $\BZ_p$-Iwasawa algebra $\Lambda$ (Cf. \cite[p. 735]{NSW}).
\item[(ii)] Even with a proper definition of $H^1_{\ord}(\Pi, \Ad^0(\bar{\rho}))$, this may not be finite without further conditions.
\end{itemize}

Nevertheless we will see that $R_\infty$ still represents the natural deformation problem. (See also Dickinson's appendix to \cite{Go}.) We first have:

\begin{lem}\label{lim}
For $A \in {\mathcal{C}}_W$,
$$\Hom(R_\infty,A) = \varinjlim \Hom(R_n,A).$$
\end{lem}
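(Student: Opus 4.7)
The statement says that, for every Artinian $A\in\mathcal{C}_W$, the natural comparison map
\[
\Phi:\varinjlim_n \Hom(R_n,A) \longrightarrow \Hom(R_\infty,A), \qquad \phi_n\longmapsto \phi_n\circ\pi_n,
\]
is a bijection, where $\pi_n:R_\infty\twoheadrightarrow R_n$ is the canonical projection. I would prove this by establishing well-definedness/injectivity and surjectivity separately.

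For the easy direction, the surjections $R_{n+1}\twoheadrightarrow R_n$ of Lemma~\ref{sur} make $\{\Hom(R_n,A)\}$ into a directed system under pullback with injective transitions, so $\varinjlim$ is an ascending union; the same coherent-lifting argument, applied to the inverse system of surjections defining $R_\infty$, shows that each projection $\pi_n:R_\infty\to R_n$ is surjective. Surjectivity of $\pi_n$ then makes precomposition $\Hom(R_n,A)\hookrightarrow\Hom(R_\infty,A)$ injective, and hence $\Phi$ is injective.

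The heart of the proof is surjectivity; the idea is to realize $R_\infty$ as a profinite ring. Since each $R_n$ is complete local Noetherian with finite residue field $k$, each $R_n/\mathfrak{m}_{R_n}^j$ is a finite ring, and
\[
R_\infty \;=\; \varprojlim_{n,j}\, R_n/\mathfrak{m}_{R_n}^j
\]
exhibits $R_\infty$ as a compact inverse limit of finite rings. Now $A$ is Artinian local with finite residue field, hence finite and therefore discrete. The morphism $\phi$ is continuous in $\widehat{\mathcal{C}}_W$ for the profinite topology, so $\ker\phi$ is open; but open ideals in this topology contain $\ker(R_\infty\to R_n/\mathfrak{m}_{R_n}^j)$ for some $n,j$. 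Thus $\phi$ factors through $R_n/\mathfrak{m}_{R_n}^j$, and \emph{a fortiori} through $R_n$, yielding the desired $\phi_n$.

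The main subtlety I expect to justify carefully is the continuity of $\phi$. Because $R_\infty$ is not known to be Noetherian, continuity is not automatic from the purely ring-theoretic data and is effectively built into the definition of morphism in $\widehat{\mathcal{C}}_W$; in the Noetherian case the inverse-limit topology coincides with the $\mathfrak{m}$-adic one, so the convention is harmless. If one prefers to avoid invoking this, a direct compactness argument works: the local condition gives $\phi(\mathfrak{m}_{R_\infty}^{N+1})\subseteq\mathfrak{m}_A^{N+1}=0$ for some $N$, and if $\phi(\ker\pi_n)\ne0$ for every $n$, the descending chain $\phi(\ker\pi_n)$ in the Artinian ring $\phi(R_\infty)\subseteq A$ stabilises at some nonzero $I_\infty$; picking $a\in I_\infty\setminus\{0\}$ with preimages $r_n\in\ker\pi_n$ satisfying $\phi(r_n)=a$, the compactness of $R_\infty$ extracts a convergent subnet whose limit lies in $\bigcap_n\ker\pi_n=0$, contradicting $a\ne 0$.
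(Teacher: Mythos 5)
Your main argument --- realize $R_\infty$ as a profinite ring, note that $A$ is finite hence discrete, so a continuous morphism $\phi:R_\infty\to A$ has open kernel containing some $\ker\bigl(R_\infty\to R_n/\mathfrak m_{R_n}^j\bigr)$ and therefore factors through $R_n$ --- is precisely a fleshed-out version of the paper's one-line proof (``clear since $A$ is finite and $R_\infty$ is the projective limit of compact rings,'' together with the inclusion $\Hom(R_n,A)\subset\Hom(R_{n+1},A)$ coming from Lemma~\ref{sur}). Your observation that each $\pi_n:R_\infty\to R_n$ is surjective, used to deduce injectivity of the transition maps and of $\Phi$, is a correct and useful elaboration (it needs compactness/Mittag--Leffler, which you have). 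You are also right to single out continuity of $\phi$ as the genuine subtlety, since for the non-Noetherian ring $R_\infty$ it is not automatic from the ring-theoretic data and must be part of the convention defining morphisms in $\widehat{\mathcal C}_W$.

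Where I would push back is on the claim that your ``direct compactness argument'' avoids invoking continuity. It does not. After extracting a subnet $r_{n_\alpha}\to r$ with $r\in\bigcap_n\ker\pi_n=0$, the step ``contradicting $a\ne 0$'' tacitly uses $\phi(r)=\lim_\alpha\phi(r_{n_\alpha})=a$, i.e.\ that $\phi$ commutes with limits --- which is continuity. Equivalently, a finite-intersection argument applied to the nested nonempty sets $\phi^{-1}(a)\cap\ker\pi_n$ would require $\phi^{-1}(a)$ to be closed, again a continuity statement. Without continuity there is simply no interaction between $\phi$ and the topology, and compactness of $R_\infty$ gives nothing. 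So that alternative paragraph is circular as written; the clean resolution is exactly the convention you identify earlier (morphisms out of $R_\infty$ are continuous, equivalently $R_\infty$ is viewed as the pro-object $(R_n)$ and $\Hom(R_\infty,A)$ is by definition $\varinjlim\Hom(R_n,A)$ for $A\in\mathcal C_W$), which the paper tacitly assumes.
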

\begin{proof}
This is clear since $A$ is finite and $R_\infty$ is the projective limit of compact rings. Note that $\Hom(R_n,A) \subset \Hom(R_{n+1},A)$.
\end{proof}

\begin{prop}\label{descent}
Let $(A, \rho_A)$ be an ordinary deformation of $\bar{\rho}\vert_\Pi$ to $A \in {\mathcal{C}}_W$. Then there exists $n < \infty$ such that $\rho_A$ extends to $\Gal(F_S/F_n)$.
\end{prop}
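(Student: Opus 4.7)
The central observation is that every $A \in \mathcal{C}_W$ is a finite ring (being Artinian local over $W(k)$ with finite residue field $k$), so the continuous representation $\rho_A \colon \Pi \to \GL_{2}(A)$ has finite image and open kernel $K \trianglelefteq \Pi$. Write $L := F_{S}^{K}$, a finite Galois extension of $F_\infty$ inside $F_S$ with $\Gal(L/F_\infty) \cong \rho_A(\Pi)$. My strategy is to enlarge $L$ to a field $L' \subset F_{S}$ that is Galois over $F$ itself (not merely over $F_\infty$) while keeping $[L':F_\infty]$ finite, and then exploit a direct-product decomposition of $\Gal(L'/F_n)$ for $n$ large.

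Concretely, pick generators so that $L = F_\infty(\alpha_1,\dots,\alpha_r)$. The minimal polynomial of each $\alpha_i$ over $F_\infty$ has coefficients in some $F_{n_i}$, so with $n_0 := \max_i n_i$ the field $M := F_{n_0}(\alpha_1,\dots,\alpha_r)$ is a finite extension of $F$ satisfying $L = F_\infty M$. Let $\widetilde{M} \subset F_S$ be the Galois closure of $M/F$ (finite Galois over $F$, still inside $F_S$ since $F_S/F$ is Galois) and set $L' := F_\infty \widetilde{M}$. Then $L' \supseteq L$, $L'/F$ is Galois as a compositum of two Galois extensions, and $[L':F_\infty] < \infty$. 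Let $m_0 \geq 0$ be the integer with $\widetilde{M} \cap F_\infty = F_{m_0}$, well defined since $\widetilde{M} \cap F_\infty$ is a finite subextension of $F_\infty/F$. Linear disjointness of $F_\infty$ and $\widetilde{M}$ over $F_{m_0}$ yields
$$\Gal(L'/F_{m_0}) \;\cong\; \Gal(F_\infty/F_{m_0}) \times \Gal(\widetilde{M}/F_{m_0}) \;=\; p^{m_0}\Delta_\infty \times N,$$
with $N := \Gal(L'/F_\infty) = \Gal(\widetilde{M}/F_{m_0})$. Inside this product, elements of $\Gal(L'/F_n)$ for $n \geq m_0$ are exactly the pairs $(\sigma,\tau)$ with $\sigma|_{F_n} = \id$ (the $\tau$-factor already fixes $F_\infty$), so $\Gal(L'/F_n) \cong p^{n}\Delta_\infty \times N$.

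For any such $n$ the desired extension is defined as the composite
$$\tilde{\rho}\colon \Gamma_n \twoheadrightarrow \Gal(L'/F_n) \cong p^{n}\Delta_\infty \times N \twoheadrightarrow N \twoheadrightarrow \Gal(L/F_\infty) \hookrightarrow \GL_{2}(A),$$
where the middle arrow is projection onto the $N$-factor, the next is restriction from $L'$ to $L$, and the last is the identification $\Gal(L/F_\infty) \cong \Pi/K$ composed with $\rho_A$. Because $\Pi \subset \Gamma_n$ is sent entirely into the $N$-factor (the $p^n\Delta_\infty$-component records the action on $F_\infty$, on which $\Pi$ is trivial), the restriction $\tilde\rho|_\Pi$ recovers $\rho_A$ by construction. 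The only genuinely non-routine point is keeping $[L':F_\infty]$ finite, which is why one takes the Galois closure of the \emph{finite} piece $M/F$ rather than of the infinite-degree extension $L/F$; the ordinary and determinant conditions on $\rho_A$ impose no additional work, since they live on decomposition subgroups already contained in $\Pi$ and are preserved by any extension.
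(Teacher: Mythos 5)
Your construction of the extension itself is a genuinely different route from the paper's, and it works. The paper splits the sequence $1 \to \Pi \to \Gal(F_S/F) \to \Delta \to 1$ by choosing a set-theoretic section $\Delta \cong \BZ_p \hookrightarrow \Gal(F_S/F)$, then extends $\rho_A$ by making a finite-index subgroup $\Delta_1$ of this section act trivially (using that $\Delta_1$ eventually centralises $\Pi$ modulo $\ker \rho_A$). You instead realise the same descent field-theoretically: you saturate the fixed field $L$ of $\ker\rho_A$ to a field $L'$ that is finite over $F_\infty$ and Galois over $F$, obtain the direct-product decomposition $\Gal(L'/F_n) \cong \Gal(F_\infty/F_n) \times N$, and extend by projecting onto the $N$-factor. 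Your version is more concrete, avoids choosing a continuous section, and the verification that $\tilde\rho|_\Pi = \rho_A$ is clean.

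The gap is in your final sentence. You claim the ordinary and determinant conditions ``impose no additional work, since they live on decomposition subgroups already contained in $\Pi$.'' That is not so. Ordinarity of $\tilde\rho$ as a representation of $\Gamma_n$ is a condition on the decomposition groups $\Gal(\bar F_{n,\mathfrak p}/F_{n,\mathfrak p})$ at primes of $F_n$ above $p$, and these are \emph{strictly larger} than their intersections with $\Pi$: the quotient is $\Gal(F_{\infty,\mathfrak p}/F_{n,\mathfrak p}) \cong \BZ_p$ since $\mathfrak p$ is totally ramified in $F_\infty$. Knowing that $\tilde\rho|_{D\cap\Pi} = \rho_A|_{D\cap\Pi}$ is upper triangular of the required shape does not force $\tilde\rho|_D$ to be: you only know that $\tilde\rho(d)$ normalises $\tilde\rho(D\cap\Pi)$ for $d \in D$. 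Likewise $\det\tilde\rho = \omega$ is a condition on all of $\Gamma_n$, not on $\Pi$. In fact your specific $\tilde\rho$ factors through the projection onto $N$, so $\det\tilde\rho$ is trivial on the preimage of $\Gal(F_\infty/F_n) \times \{1\}$ --- whereas $\omega$ reduced into the finite group $A^\times$ is generally \emph{not} trivial on $\Gal(F_\infty/F_n)$ unless $n$ is taken large relative to the characteristic of $A$. So you do need the same further step the paper takes: the lower-left coefficient of $\tilde\rho|_D$ and the character $\omega\cdot(\det\tilde\rho)^{-1}$ are continuous $A$-valued functions on $\Gamma_n$ vanishing on $\Pi$; since $A$ is finite they vanish on $\Gamma_{n'}$ for $n'$ sufficiently large, and it is the restriction to that $\Gamma_{n'}$ which gives the ordinary deformation. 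Without this, the proposition is not actually proved.
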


(By `ordinary' we mean henceforth verifying the condition \eqref{ordinary_def}.)

\begin{proof}
 As before we have an exact sequence
$$1 \rightarrow \Pi \rightarrow \Gal(F_S/F) \rightarrow \Delta \rightarrow 1$$
with $\Delta \cong \BZ_p$. 
The choice of a lifting of  a topological generator of $ \Delta$ gives a splitting; we identify $\Delta$ with its image by this section. 

Now $\Delta$ acts continuously on $\Pi$ by conjugation. Let $\Pi_1 \subset \Pi$ be the kernel of $\rho_A$, an invariant subgroup of finite index. There exists a subgroup of finite index $\Delta_1 \subset \Delta$ such that $$ \delta g \delta^{-1} \equiv g \mod {\Pi_1}$$ for $\delta \in \Delta_1.$ 

We can then set $\rho_A(g \delta)= \rho_A(g)$ for $g\in \Pi, \delta \in \Delta_1$; $\Delta_1$ corresponds to a finite extension $F_n$ and $\rho_A$ extends to $\Gal(F_S/F_n)$ (cf.~\cite[\S3.3]{Cl2}).  

 This yields a representation of $\Gamma_n$, but it is not yet ordinary. However the lower left coefficient of the matrix is a continuous function with values in A, vanishing on $\Pi$. Thus it vanishes on $\Gamma_{n'}$ for some $n' \geq n$. Likewise, the diagonal will be given by $(\omega \varepsilon, \varepsilon^{-1})$ upon restriction to $\Gamma_{n''}$, since $A$ is finite.
Similarly, one  checks that the deformation of this extension (rather than the lifting) is well-defined. 
\end{proof}
\begin{cor}\label{univ}
$R_\infty$ represents the ordinary deformations of $\bar{\rho}\vert_\Pi$.
\end{cor}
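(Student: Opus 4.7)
The corollary should be essentially a repackaging of Lemma~\ref{lim} together with Proposition~\ref{descent}, so my plan is to exhibit the bijection directly. For $A\in\mathcal{C}_W$, Lemma~\ref{lim} combined with the universal property of each $R_n$ identifies
$$\Hom(R_\infty,A)=\varinjlim_n \Hom(R_n,A)=\varinjlim_n \{\text{ordinary deformations of }\bar\rho|_{\Gamma_n}\text{ to }A\},$$
where on the right I mean equivalence classes of lifts modulo conjugation by $1+\mathfrak{m}_A M_2(A)$. Since $\Pi=\Gal(F_S/F_\infty)\subset \Gamma_n$ for every $n$, restriction produces a compatible family of maps into the set of ordinary deformations of $\bar\rho|_\Pi$ to $A$, and the task reduces to showing that the induced map on the colimit is a bijection.

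Surjectivity is exactly Proposition~\ref{descent}: starting from an ordinary deformation $(A,\rho_A)$ of $\bar\rho|_\Pi$, one extends $\rho_A$ to $\Gal(F_S/F_n)$, then passes to a larger $n'$ to kill the lower-left entry, and to $n''$ to restore the ordinary diagonal $(\omega\varepsilon,\varepsilon^{-1})$; the resulting ordinary deformation of $\bar\rho|_{\Gamma_{n''}}$ gives an element of $\Hom(R_{n''},A)\hookrightarrow \Hom(R_\infty,A)$ restricting to $\rho_A$ on $\Pi$.

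For injectivity, I would suppose $x,y\in \Hom(R_\infty,A)$ yield equivalent deformations of $\bar\rho|_\Pi$ and, using Lemma~\ref{lim}, represent them by ordinary deformations $\rho_x,\rho_y\colon\Gamma_n\to\GL_2(A)$ for a common $n$. After conjugating $\rho_y$ by a suitable element of $1+\mathfrak{m}_A M_2(A)$ (which does not affect its class in $\Hom(R_n,A)$) I may assume $\rho_x|_\Pi=\rho_y|_\Pi$ as honest representations. For $\delta\in\Gamma_n$ and $g\in\Pi$, applying $\rho_x$ and $\rho_y$ to $\delta g\delta^{-1}\in\Pi$ and comparing, the equality on $\Pi$ forces $M(\delta):=\rho_y(\delta)^{-1}\rho_x(\delta)$ to centralise $\rho_x|_\Pi$ in $\GL_2(A)$. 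The key local input, implicit in the paragraph preceding Lemma~\ref{lim}, is that the ordinary extension class of $\bar\rho$ restricts non-trivially to the decomposition group at $\mathfrak{p}$ inside $\Pi$; hence $\bar\rho|_\Pi$ is indecomposable with distinct diagonal characters, giving $\End_\Pi(\bar\rho)=k$. A Nakayama-style induction on the length of $A$ (using that for $I\subset A$ with $\mathfrak{m}_A I=0$ one has $M_2(I)^\Pi=I\cdot\mathrm{Id}$) promotes this to $\End_\Pi(\rho_x)=A\cdot\mathrm{Id}$. Therefore $M(\delta)=z(\delta)\,\mathrm{Id}$ with $z(\delta)\in 1+\mathfrak{m}_A$, and the determinant condition $\det\rho_x=\det\rho_y=\omega$ gives $z(\delta)^2=1$; since $p>2$, the only square root of $1$ in $1+\mathfrak{m}_A$ is $1$ itself, so $z(\delta)=1$. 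Thus $\rho_x=\rho_y$ on all of $\Gamma_n$, and $x=y$ in $\Hom(R_n,A)$.

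The step I expect to require the most care is the promotion of Schur's lemma from $\bar\rho|_\Pi$ to the deformation $\rho_x$: the group $\Pi$ does not satisfy the usual finiteness hypotheses (compare issue (i) in the paragraph preceding Lemma~\ref{lim}), so one cannot invoke cohomological-dimension arguments directly. The length induction stays purely algebraic, however, using only the residual statement $\End_\Pi(\bar\rho)=k$ together with the filtration of $A$ by powers of $\mathfrak{m}_A$, and sidesteps the issue. With this in hand the remainder of the proof is a direct consequence of Lemma~\ref{lim} and Proposition~\ref{descent}.
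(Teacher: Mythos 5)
Your proof is correct and follows exactly the route the paper intends: the paper states the corollary without proof, treating it as a direct consequence of Lemma~\ref{lim} (which identifies $\Hom(R_\infty,A)$ with $\varinjlim\Hom(R_n,A)$) and Proposition~\ref{descent} (which gives surjectivity of the restriction map onto deformations of $\bar\rho|_\Pi$). The one substantive thing you add is the injectivity of $\varinjlim\Def_{\Gamma_n}(A)\to\Def_\Pi(A)$, which the paper does not address explicitly; your argument there is sound. The promotion of $\End_\Pi(\bar\rho)=k$ to $\End_\Pi(\rho_x)=A\cdot\mathrm{Id}$ by length induction on $A$ is the standard Schur-lemma lift and works as you say, and the final step $z(\delta)^2=1\Rightarrow z(\delta)=1$ in $1+\mathfrak{m}_A$ correctly uses $p>2$. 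One small remark: for the input $\End_\Pi(\bar\rho)=k$ you could equally well invoke absolute irreducibility of $\bar\rho|_\Pi$ directly, since the images $\bar\rho(\Gal(F_S/F_n))$ form a decreasing chain of finite groups and hence stabilize at $\bar\rho(\Pi)$, which is therefore one of the (absolutely irreducible) images $\bar\rho(\Gal(F_S/F_N))$; this avoids having to verify that the local diagonal characters $\omega\varepsilon$ and $\varepsilon^{-1}$ remain distinct on the decomposition group inside $\Pi$, though as you note that too holds since $\omega$ is ramified at $\mathfrak{p}$ while $\varepsilon$ is unramified.
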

Note in particular that there is a natural universal deformation of $\bar{\rho}\vert_\Pi$, over $R_\infty$, defined by $ \varprojlim  \rho_n$.

\subsection{Main result}\label{ss:mr}The purpose of this paper is the following theorem. 
\begin{thm}\label{main}
Let $\bar{\rho}: G_F \rightarrow \GL_{2}(k)$
be an absolutely irreducible representation as in \S\ref{ss:ord}.
 Let $(\rho,V)$ be a deformation of $\bar{\rho}$ over the integer ring of a $p$-adic field, $V$ the underlying vector space and let $T \subset \Ad^0V$ be a $G_F$-stable lattice.
Assume $R_\infty$ is Noetherian.
   Assume further that 
   \begin{itemize}
  \item[(Aut)] $\rho$ is automorphic,   
  \item[(ad$_{F(\zeta_{p})}$)] $\bar{\rho}|_{G_{F(\zeta_{p})}}$ is adequate and 
  \item[($\mu$)] $\mu(X^{1}(F,T^{*}(1))_{\tor})=0=\mu(X^{1}(F,T)_{\tor})$ 
   \end{itemize} 

Then it is formally smooth, i.e.
$$R_\infty \simeq W(k)[\![X_1,...,X_s]\!]$$
for some $s \geq 1$.
\end{thm}

\vskip2mm

(Refer to \S\ref{s:WL_ad} for the definition of  the Iwasawa modules $X^1$ and the corresponding $\mu$-invariants, and the notion of `adequate'.)

\vskip2mm

\begin{remark}\label{Ex}
\noindent
\begin{itemize}
\item[(1)] For conditions on the data ensuring that $R_\infty$ is Noetherian, see \cite[Cor. 5.11]{Hi}.
\item[(2)] Let $F$ be $\BQ$ and 
$$p\in\{11,17,19,23,29,31,37,41,43,47,59,61,67\}.$$
Then there exists a $p$-ordinary $f\in S_{2}(\Gamma_{0}(p))$ such that $R_{\infty} \simeq W(k)[\![X]\!]$ (cf. \cite[Ex. 1.68]{Hi}). Here we consider deformations of the associated mod $p$ Galois representation.
\item[(3)] The hypothesis ({\rm NS}) is inessential. It is currently used for arguments in section \ref{s:local}, specifically Lemma \ref{spld}, which is key for the proof of Theorem \ref{main}. However, even otherwise, the lemma remains true. (Basically, various exact sequences in section \ref{s:local} are split otherwise and can be analysed directly.) The details will appear in \cite{BCM}. 
\end{itemize} 
\end{remark}
\begin{remark} In light of Hida theory, one has $\dim(R_{\infty})\geq 2$. An outline: 

First, assume $F=\BQ$ and $F_\infty= \BQ_\infty$ the $\BZ_p$-extension of $\BQ$. 
In this case the $R_{n}$'s are finite over $\BZ_{p}$ but $R_{\infty}$ has Krull dimension at least two: 
let $f$ be a weight 2 eigenform, ordinary. Then there is a Hida family $\mathcal{F}$ through $f$ (cf.~\cite[Cor. 3.2]{Hi'}), whence
$$\tilde{\rho} : G_{\BQ,S} \rightarrow \GL_{2}(\BI)$$
for $\BI$ finite over $\BZ_p[\![X]\!]$ (cf.~\cite[Thm. II]{Hi0}). In Hida's construction, $\tilde{\rho}$ parametrises a family of representations of varying weights. However:
\begin{lem}
$\tilde{\rho}\vert _{\Gal(\BQ_{\infty, S}/ \BQ_\infty)}$ is of weight 2.
\end{lem}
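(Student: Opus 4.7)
The plan is to exploit Hida's standard description of the decomposition-group behaviour of $\tilde\rho$: at a place above $p$,
\[ \tilde\rho|_{D_p} \;\sim\; \begin{pmatrix} \tilde\chi_1 & * \\ 0 & \tilde\chi_2 \end{pmatrix}, \qquad \tilde\chi_2 \text{ unramified}\]
(cf.~\cite{Hi0}). This shape with unramified lower-right entry obviously survives restriction to $D_{\BQ_{\infty,p}}$.

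The heart of the argument is to identify $\det(\tilde\rho|_{G_{\BQ_\infty,S}})$ with the cyclotomic character $\omega$. In Hida's parametrisation, $S'$ is finite over $\Lambda = \BZ_p[[X]]$, and the weight variable $X$ corresponds to the cyclotomic quotient $\Gamma := \Gal(\BQ_\infty/\BQ)$. In this setup one should have
\[\det\tilde\rho \;=\; \omega \cdot \kappa,\]
where $\kappa: G_{\BQ,S} \to S'^{\times}$ is pulled back from the tautological character $\Gamma \to \Lambda^{\times}$ (normalised so that $\kappa$ specialises to the trivial character at $f$). The identity can be checked on the Zariski-dense set of classical arithmetic points: at a weight-$k$ specialisation the determinant equals $\omega^{k-1} = \omega \cdot \langle\omega\rangle^{k-1}$, and (the Teichmuller part $\omega^{k-1}$ being fixed since all arithmetic weights in the family lie in a single class modulo $p-1$) the variable part lives in $\langle\omega\rangle^{k-2}$, which factors through $\Gamma$. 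Since $\kappa$ factors through $\Gamma$, $\kappa|_{G_{\BQ_\infty,S}} \equiv 1$, so $\det(\tilde\rho|_{G_{\BQ_\infty,S}}) = \omega|_{G_{\BQ_\infty,S}}$.

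Setting $\varepsilon := \tilde\chi_2^{-1}|_{D_{\BQ_{\infty,p}}}$, which is unramified, the determinant identity forces $\tilde\chi_1|_{D_{\BQ_{\infty,p}}} = \omega \cdot \varepsilon$, so $\tilde\rho|_{D_{\BQ_{\infty,p}}}$ has precisely the ordinary-of-weight-$2$ form from \eqref{ordinary_def}. The inequality $\varepsilon^2 \ne 1$ descends from the residual hypothesis (ord): since $\BQ_{\infty,p}/\BQ_p$ is totally ramified, the maximal unramified quotients of $D_{\BQ_{\infty,p}}$ and $D_{\BQ_p}$ coincide, so the nontrivial $\bar\varepsilon^2$ on $D_{\BQ_p}$ restricts to a nontrivial character on $D_{\BQ_{\infty,p}}$, which lifts to give $\varepsilon^2 \ne 1$ in $S'^{\times}$.

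The essentially only obstacle is the clean factorisation $\det\tilde\rho = \omega\cdot\kappa$ with $\kappa$ factoring through $\Gamma$. This is the standard ``weight variable is the cyclotomic twist'' feature of Hida theory, but may require a small bookkeeping step when $S'/\Lambda$ is a nontrivial finite extension: one establishes the identity over $\Lambda$ by interpolation at arithmetic points and then pulls it back along $\Lambda \hookrightarrow S'$.
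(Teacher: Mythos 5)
Your proof is correct, but it takes a genuinely different route from the paper's. The paper argues by approximation through specializations: it picks the arithmetic weights $n_r = 2 + (p-1)p^r$, observes that $\omega^{n_r-2} = \omega^{(p-1)p^r}$ is trivial modulo $p^{r}$, and concludes that the base change of $f_{n_r}$ to $F_r$ reduced modulo $p^r$ has weight $2$; letting $r \to \infty$ shows the family is weight $2$ on $G_{\BQ_\infty,S}$. You instead work directly with the family determinant: you factor $\det\tilde\rho = \omega\cdot\kappa$ with $\kappa$ inflated from $\Gamma = \Gal(\BQ_\infty/\BQ)$, note that $\kappa|_{G_{\BQ_\infty,S}}$ is trivial, and then plug this into Hida's local description $\tilde\rho|_{D_p}\sim\left(\begin{smallmatrix}\tilde\chi_1 & * \\ 0 & \tilde\chi_2\end{smallmatrix}\right)$ with $\tilde\chi_2$ unramified, to read off the weight-$2$ shape. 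Both arguments hinge on the same arithmetic fact --- the variable part of $\det\tilde\rho$ factors through the cyclotomic quotient $\Gamma$, precisely because $\langle\omega\rangle$ has kernel $G_{\BQ_\infty}$ --- but the paper renders it as a limit of finite-level congruences while you package it once and for all as a factorization of the big character. Your version is arguably cleaner and more self-contained (it also makes explicit the check $\varepsilon^2 \ne 1$, which the paper elides), while the paper's version avoids committing to a precise normalization of the determinant character $\kappa$ over a possibly nontrivial extension $S'/\Lambda$, which is the bookkeeping point you yourself flag at the end.
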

\begin{proof}
For example, suppose that the Hida family $\mathcal{F}$ corresponds to $(f_k)$ where $f_k$
has weight $k$, and $f_2= f$.  Set
$$n_r = 2 + (p-1) p^r.$$  
Then the base change of $f_{n_r}$ to $F_r$, reduced modulo $p^r$, has weight $2$.  
\end{proof}
Thus the surjection $R_\infty \rightarrow \BI$ yields  
 $\dim(R_\infty) \geq 2$
without the hypotheses in Theorem \ref{main}. (The surjectivity just follows by considering the traces of Frobenii for the universal representation.) A similar argument applies to the general case (cf. \cite[Thm. II]{Hi1}).
\end{remark}

\section{Local cohomology}\label{s:local}

In this section $K=F_{n,\mathfrak{p}}$ is local and $\bar{\rho}$ is an ordinary representation of $G_K$ (verifying the conditions of \S\ref{s:Intro}). In particular the extension class arising from $\bar{\rho}$ is non-split (cf.~({\rm NS})). For simplicity we write $d$ for $d_{\mathfrak{p}} = [F_{\mathfrak{p}}:\BQ_p].$ 

Let $V=k^2$ be the space of $\bar{\rho}$, and $W = \End^0(V)$ be the space of traceless endomorphisms of $V$. It is endowed with the natural representation $\Ad^0(\bar{\rho})$. Let $\Ad^0\bar{\rho}(1)$  be the Tate twist, the tensor product  $\Ad^0(\bar{\rho}) \otimes k[\omega]$ with the cyclotomic character. (Recall that $k[\chi]$ is the module associated to a character $\chi$; $V(1)= V \otimes k[\omega].)$ The main result is Lemma \ref{spld}. 

\subsection{Local cohomology of the adjoint}
Let  $W_0 \subset W_1 \subset W_2=W$ be the filtration of $W$:
\begin{equation}\label{udl}
W_{0}=\bigg{\{}\left( \begin{array}{cc} 0 & * \\ 0 & 0 \end{array}\right)\ \bigg{\}}, 
\qquad 
W_{1}=\bigg{\{}\left( \begin{array}{cc} * & * \\ 0 & * \end{array} \right)\ \bigg{\}}
\end{equation}
preserved by $G_K$. Then as $G_K$-modules,
$$W_0  \cong k[\varepsilon^2 \omega], \qquad W_1/W_0 \cong k[\mathds{1}], \qquad W_2/W_1 \cong k[\omega^{-1} \varepsilon^{-2}] $$ 
for $\mathds{1}$ being the trivial character. 

The exact sequence
$$0 \rightarrow W_0 \rightarrow W_1 \rightarrow W_1/W_0 \rightarrow 0$$
induces
\vskip2mm
$H^0(K, W_1) \rightarrow H^0(K, W_1/W_0) \rightarrow H^1(K, W_0) \rightarrow H^1(K, W_1) \rightarrow H^1(K, W_1/W_0) \rightarrow H^2(K, W_0)  \rightarrow H^2(K, W_1)  \rightarrow H^2(K, W_1/W_0) \rightarrow 0.$
\vskip2mm
Write $h^i(K, -) = \dim_{k}H^i(K,-).$

\begin{lem}\label{dim}

\item[(i)] $h^0(K,W_1/W_0)= 1$ and the map $H^0(K, W_1/W_0) \rightarrow H^1(K, W_0)$ is injective.
\item[(ii)] $h^1(K,W_0)= p^n d$.
\item[(iii)] $h^1(K, W_1) = 2 p^n d.$
\item[(iv)] $h^1(K, W_1/W_0) = p^n d +1$.
 \item[(v)] $ h^2(K,W_1)=  0 $.

 \end{lem}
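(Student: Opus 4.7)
The lemma is an exercise in local Euler characteristics and Tate duality, with the only nonformal input being (NS).

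The plan is to first record three facts about characters on $G_K$. Since $K/\mathbb{Q}_p$ has degree $p^n d$, the local Euler characteristic formula reads
\[
h^0(K,M) - h^1(K,M) + h^2(K,M) = -p^n d \cdot \dim_k M
\]
for any finite $p$-torsion $G_K$-module $M$. Next, $\varepsilon^2 \neq 1$ on $G_K$: this was assumed on $G_{F_\mathfrak{p}}$ and, as noted in the introduction (inflation--restriction plus $K/F_\mathfrak{p}$ totally ramified), passes to $G_K$. And $\omega|_{G_K} \neq 1$, because $F_\mathfrak{p}/\mathbb{Q}_p$ is unramified and $K/F_\mathfrak{p}$ has $p$-power degree, so $\mu_p \not\subset K$. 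In particular $\omega$, $\varepsilon^{\pm 2}$ and $\omega\varepsilon^{\pm 2}$ are all non-trivial on $G_K$.

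For (ii) I apply the Euler characteristic to $W_0 = k[\omega\varepsilon^2]$: $H^0(K,W_0)=0$ because $\omega\varepsilon^2 \neq 1$, while by Tate duality $H^2(K,W_0)^* \cong H^0(K, k[\varepsilon^{-2}]) = 0$; hence $h^1(K,W_0) = p^n d$. For (iv), on $W_1/W_0 = k$ I have $h^0 = 1$ and, by Tate duality, $h^2 = \dim H^0(K, k[\omega]) = 0$, giving $h^1 = p^n d + 1$.

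For (i), the hypothesis (NS) says $\bar\rho|_{G_K}$ is absolutely indecomposable, so $\End(\bar\rho)^{G_K} = k$, which forces $W^{G_K} = 0$ and in particular $H^0(K,W_1) = 0$. The connecting map in the long exact sequence of $0 \to W_0 \to W_1 \to W_1/W_0 \to 0$ is then injective on $H^0(K,W_1/W_0)$, which has dimension $1$. For (v), I compute $H^2(K,W_1)$ via duality: the dual-and-twist of the defining sequence of $W_1$ is a short exact sequence
\[
0 \to k[\omega] \to W_1^{\vee}(1) \to k[\varepsilon^{-2}] \to 0,
\]
both of whose outer terms have trivial $G_K$-invariants, so $H^0(K, W_1^{\vee}(1)) = 0$ and thus $H^2(K,W_1) = 0$. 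Finally (iii) falls out of the Euler characteristic for $W_1$ (a $2$-dimensional module): $0 - h^1(K,W_1) + 0 = -2 p^n d$, giving $h^1(K,W_1) = 2 p^n d$.

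There is no real difficulty here; the one place the hypotheses on $\bar\rho$ really intervene is (i), where (NS) kills $H^0(K,W_1)$, and in the perpetual use of $\omega,\varepsilon^2 \neq 1$ on $G_K$.
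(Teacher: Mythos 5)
Your proof is correct and follows essentially the same route as the paper: local Euler--Poincar\'e for the dimension counts, Tate duality to kill the $H^2$'s, and the non-splitness of $\bar\rho|_{G_K}$ (i.e.\ (NS)) to get $H^0(K,W_1)=0$ and hence the injectivity in (i). The only cosmetic differences are that you compute (iv) directly from Euler characteristic of $W_1/W_0$ rather than from the long exact sequence, and you establish (v) by dualizing the whole short exact sequence and computing $H^0(K,W_1^\vee(1))=0$, whereas the paper instead notes $H^2(K,W_0)=H^2(K,W_1/W_0)=0$ and sandwiches $H^2(K,W_1)$ in the long exact sequence; also note that for (i) you prove slightly more than needed ($W^{G_K}=0$ rather than just $W_1^{G_K}=0$), and the step ``absolutely indecomposable $\Rightarrow \End^{G_K}=k$'' uses that the diagonal characters $\omega\varepsilon$ and $\varepsilon^{-1}$ are distinct, which you had already recorded.
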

 \begin{proof}
 Write $V'= V^*(1)$. Then $$W_0' \cong k[\varepsilon^{-2}], \qquad (W_1/W_0)' \cong k[\omega].$$ By Tate duality we see that $h^2(K, W_0) = h^2(K, W_1/W_0)= 0 .$ This implies (v). 
 
 The first part of  (i) is obvious; we have $h^0(K,W_1)=0$ since the extension is non-split, so the map is injective. The map $H^1(K, W_1) \rightarrow H^1(K, W_1/W_0)$ is surjective since $H^2(K,W_0)=0$.  Now the formulas (ii)-(iii) follow from Tate's Euler-Poincar\'{e} formula and (iv) from the exact sequence. 
 \end{proof}
 
 Now recall that for $X$ a representation of $G_K$ on a $k$-vector space, 
 $$H^1_{\mathrm{nr}}(K,X) = \ker \{H^1(G_K,X) \rightarrow H^1(I_K,X)\}$$
where $I_K$ is the inertia. We \textit{define} the unramified classes $H^1_{\ur}(K, W_1)$ to be the inverse image of $H^1_{\mathrm{nr}}(K, W_1/W_0).$ 
  
  At this point we have the exact sequence
 \begin{equation}\label{udses}
 0 \rightarrow H^0(K, W_1/W_0) \rightarrow H^1(K, W_0)   \rightarrow H^1(K, W_1)    \rightarrow H^1(K, W_1/ W_0) \rightarrow  0 
\end{equation}
where the corresponding dimensions are $ (1, p^n d, 2 p^nd, p^n d+1)$. Since $W_1/W_0$ is with trivial $G_K$-action, $H^1_{\rm{nr}}(K, W_1/W_0) \cong k$. Thus

$$
\dim_{k}H^1_{\ur}(K, W_1)= p^n d.
$$

 Now the exact sequence
 $$0 \rightarrow W_1 \rightarrow W \rightarrow W/W_1 \rightarrow  0 $$
 induces
 \begin{equation}\label{dlses}
  0 \rightarrow H^1(K, W_1)  \rightarrow H^1(K,W) \rightarrow H^1(K, W/W_1) \rightarrow 0
 \end{equation}
 by Lemma \ref{dim}. 
   
   We \textit{define} $H^1_{\ord}(K, \Ad^0 \bar{\rho})$ as the image of $H^1_{\ur}(K, W_1)$ in $H^1(K,W)$. We also note the vanishing of $H^2(K, \Ad^0 \bar{\rho})$ by the analogue of (\ref{dlses}) for $H^2$, and Tate duality for $W/W_1$.
   
   We summarise the results obtained so far:
   \begin{lem}\label{lc1}
   \item[(i)] $H^0(K, \Ad^0 \bar{\rho}) = H^2(K, \Ad^0 \bar{\rho}) = 0.$
   \item[(ii)] $\dim_{k} H^1_{\ord}(K, \Ad^0 \bar{\rho}) = p^n d$.
   \item[(iii)] $\dim_{k} H^1(K, \Ad^0 \bar{\rho}) = 3 p^n d.$
 
   \end{lem}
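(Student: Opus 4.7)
The plan is to assemble parts (i)--(iii) from the two short exact sequences
$0 \to W_0 \to W_1 \to W_1/W_0 \to 0$ and $0 \to W_1 \to W \to W/W_1 \to 0$, together with the dimension computations already recorded in Lemma~\ref{dim} and (\ref{udses})--(\ref{dlses}). The whole statement is really a bookkeeping summary; the only thing one has to verify carefully is the non-triviality of the characters $\varepsilon^2\omega$, $\omega^{-1}\varepsilon^{-2}$ on $G_K$, which follows from the ordinary definition ($\varepsilon^2 \neq 1$ is preserved along totally ramified extensions, as noted in \S1.2) and from the hypothesis that $F$ does not contain the $p$-th roots of unity (so $\omega$ is non-trivial on $G_{K_n}$).

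For (i), I would deduce $H^0(K, \Ad^0 \bar{\rho}) = 0$ either directly from (NS)---since $\bar{\rho}|_{G_K}$ is absolutely indecomposable, Schur's lemma forces any commuting endomorphism to be scalar, hence $\Ad^0$-invariants vanish---or from the filtration: $H^0(K,W_0)=0$ because $\varepsilon^2\omega \neq 1$, the map $H^0(K, W_1) \to H^0(K, W_1/W_0)$ is zero as the extension defining $W_1$ is non-split (as used in the proof of Lemma \ref{dim}), and $H^0(K, W/W_1)=0$ since $\omega^{-1}\varepsilon^{-2} \neq 1$. For $H^2(K, \Ad^0\bar\rho) = 0$, I would use the $H^2$-analogue of (\ref{dlses}): Lemma~\ref{dim}(v) gives $H^2(K, W_1)=0$, and Tate local duality identifies $H^2(K, W/W_1)$ with the dual of $H^0(K, k[\varepsilon^2])$, which vanishes since $\varepsilon^2 \neq 1$.

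For (ii), the definition presents $H^1_{\ord}(K, \Ad^0\bar{\rho})$ as the image of $H^1_{\ur}(K, W_1)$ under the inclusion $H^1(K, W_1) \hookrightarrow H^1(K, W)$ coming from (\ref{dlses}). Because that map is injective, $\dim H^1_{\ord}(K, \Ad^0\bar\rho) = \dim H^1_{\ur}(K, W_1) = p^n d$, which was computed just after (\ref{udses}) via the triviality of $W_1/W_0$ and the values $(1, p^nd, 2p^nd, p^nd+1)$ for the dimensions in (\ref{udses}).

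For (iii), the exact sequence (\ref{dlses}) gives $\dim H^1(K,W) = \dim H^1(K, W_1) + \dim H^1(K, W/W_1)$. Lemma \ref{dim}(iii) supplies $2p^n d$ for the first summand; for the second, Tate's Euler--Poincar\'e formula applied to $W/W_1 \cong k[\omega^{-1}\varepsilon^{-2}]$ yields $\dim H^1(K, W/W_1) = [K_n:\BQ_p] + \dim H^0 + \dim H^2 = p^n d$, since the $H^0$ and $H^2$ both vanish (the $H^2$ by the Tate duality argument above). Adding gives $3p^n d$. No genuine obstacle arises---the delicate point, already dealt with in (\ref{udses}), is the bookkeeping of the boundary $H^0(K, W_1/W_0) \to H^1(K, W_0)$, but once that is recorded, (ii) and (iii) follow by plain dimension-counting in the two filtration sequences.
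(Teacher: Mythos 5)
Your assembly of (i)--(iii) from Lemma~\ref{dim} and the two filtration sequences (\ref{udses}), (\ref{dlses}) is exactly what the paper does when it says ``we summarise the results obtained so far,'' and the dimension counting in (ii) and (iii) is correct. One computational slip to flag: you identify the Tate dual of $H^2(K, W/W_1)$ with $H^0(K, k[\varepsilon^2])$, but since $W/W_1 \cong k[\omega^{-1}\varepsilon^{-2}]$, its Cartier--Tate dual is $(W/W_1)^*(1) \cong k[\omega^2\varepsilon^2]$ (equivalently, by self-duality of $\Ad^0$, $(W/W_1)^*(1) \cong W_0(1) = k[\omega^2\varepsilon^2]$), not $k[\varepsilon^2]$. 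The vanishing you want still holds --- for $p>3$ because $\omega^2$ is ramified, and for $p=3$ because $\omega^2$ is trivial so the character reduces to $\varepsilon^2 \neq 1$ --- but the character you wrote is off by a factor of $\omega^2$. Also, your appeal to ``Schur's lemma'' for $H^0(K,\Ad^0\bar\rho)=0$ is loose: Schur applies to irreducibles, and an absolutely indecomposable module need not have scalar commutant in general; what makes it work here is the specific shape $\begin{pmatrix}\omega\varepsilon & * \\ 0 & \varepsilon^{-1}\end{pmatrix}$ with $\omega\varepsilon \neq \varepsilon^{-1}$ and non-split, and your parallel filtration argument (which is the paper's) is the cleaner justification.
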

   
   (The third equality coming from (i) and the Euler-Poincar\'{e} formula applied to $W$.)
   
   Now consider the extension $K=F_{n, \mathfrak{p}} = K_n$ of $K_0= F_{\mathfrak{p}}$, whence an action of $\Delta_n= \Gal(K_n/K_0)$ on the cohomology groups $H^*(K_n, -).$
\begin{lem}\label{sec:freeness}
$H^1(K_n, \Ad^0\bar{\rho} (1))$ is free over $k[\Delta_n]$ of rank $3d$ .
\end{lem}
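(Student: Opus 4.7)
Write $M=\Ad^0\bar\rho(1)$, $N=H^1(K_n,M)$, and $\Delta=\Delta_n=\Gal(K_n/K_0)$; this last is cyclic of order $p^n$, so $k[\Delta]$ is a local Artinian ring with residue field $k$. The plan is to compute $N$ as a $k[\Delta]$-module via the Hochschild--Serre spectral sequence
\[E_2^{p,q}=H^p(\Delta,H^q(K_n,M))\Rightarrow H^{p+q}(K_0,M),\]
and then conclude freeness by Nakayama once the $k$-dimensions of $N$ and of its coinvariants are in hand.

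First I would assemble the inputs. Lemma \ref{lc1}(i), applied both to $K_0$ and to $K_n$ (the argument given there is uniform in the local field), yields $H^0(K,\Ad^0\bar\rho)=H^2(K,\Ad^0\bar\rho)=0$; combined with Tate local duality and the self-duality of $\Ad^0\bar\rho$ under the trace pairing, this gives $H^0(K,M)=H^2(K,M)=0$ for $K\in\{K_0,K_n\}$. Tate's local Euler--Poincar\'e formula then pins down the middle dimensions: $\dim_k H^1(K_n,M)=3p^nd$ and $\dim_k H^1(K_0,M)=3d$. Since only the row $q=1$ of the $E_2$-page is nonzero, every potentially nontrivial differential lands in or issues from a vanishing group, so the spectral sequence collapses and delivers canonical isomorphisms $H^p(\Delta,N)\cong H^{p+1}(K_0,M)$ for $p\ge 0$. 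For $p\ge 1$ the target vanishes (using $H^2(K_0,M)=0$ together with the fact that $K_0$ has $p$-cohomological dimension two), while for $p=0$ one obtains $N^\Delta\cong H^1(K_0,M)$, of $k$-dimension $3d$.

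Because $\Delta$ is a finite cyclic $p$-group, Tate cohomology $\hat H^{\ast}(\Delta,N)$ is $2$-periodic; the vanishing of $H^i(\Delta,N)$ for $i\ge 1$ therefore forces $\hat H^i(\Delta,N)=0$ for every $i\in\BZ$. In particular the norm map $N_\Delta\to N^\Delta$ is an isomorphism, so $\dim_k N_\Delta=\dim_k N^\Delta=3d$. Applying Nakayama's lemma over the local ring $k[\Delta]$, any lift of a $k$-basis of $N_\Delta$ gives a surjection $k[\Delta]^{3d}\twoheadrightarrow N$; comparing total $k$-dimensions ($3d\cdot p^n$ on each side) shows this surjection is an isomorphism, and $N$ is free of rank $3d$ over $k[\Delta_n]$, as claimed. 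The only delicate ingredient is the vanishing of $H^2(K_n,\Ad^0\bar\rho)$, which ultimately depends on $\varepsilon^2\ne 1$ surviving restriction to $G_{K_n}$ --- a point already noted in \S1; once that is secured the rest is formal spectral-sequence manipulation plus Nakayama.
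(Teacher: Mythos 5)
Your argument is correct, and the endgame (Nakayama over the local ring $k[\Delta_n]$ plus a dimension count) is exactly the paper's. Where you diverge is in establishing $\dim_k N_{\Delta_n}=3d$. The paper exploits the $\Delta_n$-equivariance of the local Tate pairing together with the self-duality of $\Ad^0\bar\rho$: the Pontryagin dual of $H_0(\Delta_n,H^1(K_n,\Ad^0\bar\rho(1)))$ is $H^0(\Delta_n,H^1(K_n,\Ad^0\bar\rho))$, and inflation-restriction (using $H^0(K_n,\Ad^0\bar\rho)=0$) identifies this with $H^1(K_0,\Ad^0\bar\rho)$, of dimension $3d$. You instead run Hochschild--Serre directly on the twisted module, get the collapse at $E_2$ from $H^0(K_n,M)=H^2(K_n,M)=0$, read off $N^{\Delta_n}\cong H^1(K_0,M)$ and the vanishing of all higher $H^i(\Delta_n,N)$, and then invoke the $2$-periodicity of Tate cohomology for the cyclic $p$-group $\Delta_n$ to conclude $\hat H^0=\hat H^{-1}=0$, hence that the norm map $N_{\Delta_n}\to N^{\Delta_n}$ is an isomorphism. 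Both routes use the same vanishing inputs and the same Euler--Poincar\'e dimension count; the paper's is marginally shorter since it sidesteps the periodicity argument by dualizing across the Tate pairing once, while yours has the small advantage of staying with the single module $N$ throughout and of making the $k[\Delta_n]$-cohomological vanishing of $N$ completely explicit (indeed it shows $N$ is cohomologically trivial, which over the local ring $k[\Delta_n]$ is equivalent to freeness --- so you could even cite that directly and skip Nakayama). Either version is a legitimate proof of the lemma.
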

\begin{proof}   
   Write $M_{n}= H^1(K_n, \Ad^0 \bar{\rho}(1)).$ Note that $W$ is self-dual, so $\dim_{k} M_{n} = 3 p^n d$ by Lemma \ref{lc1} and Tate duality. 
   
   We show that the space of coinvariants $H_0(\Delta_n,M_{n})$ has dimension $3d$: this implies by Nakayama's lemma that there is a surjective map $k[\Delta_n]^{3d} \rightarrow M_{n}$, and we conclude by counting dimensions. 
   
   However, the dual of $H_0(\Delta_n, M_{n})$ is $H^0(\Delta_n, H^1(K_n, \Ad^0\bar{\rho}))$; this is isomorphic to $H^1(K_0, \Ad^0\bar{\rho})$ by inflation-restriction as $H^0(K_n, \Ad^0\bar{\rho}) = 0$. By Lemma \ref{lc1}, the dimension of this space is $3d$.
 \end{proof}  
   \vskip2mm
   
   We now consider the subspace $H^1_{\ord}(K_n, \Ad^0 \bar{\rho})$, of dimension $p^n d$. Note that the filtration $W_i$ of $W$ gives rise to cohomology  spaces on which $\Delta_n$ acts.
   
   \begin{lem}\label{inv}
   
  $H^1_{\mathrm{nr}}(K_n, W_1/W_0)$, $H^1_{\ur}(K_n, W_1)$ and $H^1_{\ord}(K_n, \Ad^0\bar{\rho})$ are invariant by the action of $\Delta_n$.
   
   \end{lem}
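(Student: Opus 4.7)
My approach is to show that each of the three subspaces is cut out by a $\Delta_n$-equivariant map from or to another cohomology group, so stability follows automatically once one has the right functoriality. The one genuine input is that the relevant inertia subgroup is normal in the larger Galois group.

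First I would check that $I_{K_n}$ is a $G_{K_0}$-stable subgroup of $G_{K_n}$. Since $K_n/K_0$ is Galois (indeed cyclic totally ramified of degree $p^n$), $G_{K_n}$ is normal in $G_{K_0}$; and the inertia $I_{K_n}$ is the kernel of the unramified quotient $G_{K_n}\to \widehat{\BZ}$, a characteristic subgroup of $G_{K_n}$. Equivalently, one may write $I_{K_n}=G_{K_n}\cap I_{K_0}$ (using that $K_n$ has the same residue field as $K_0$), which is an intersection of two $G_{K_0}$-normal subgroups. Consequently the restriction map
\[
H^1(G_{K_n},X)\lra H^1(I_{K_n},X)
\]
is $\Delta_n$-equivariant for any $G_{K_0}$-module $X$, so its kernel $H^1_{nr}(K_n,X)$ is $\Delta_n$-stable. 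Applied to $X=W_1/W_0$ this settles the first assertion.

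Next I would use the fact, recalled in the paragraph preceding the lemma, that the filtration $W_0\subset W_1\subset W$ is preserved by the action of $\Delta_n$ (as $\Delta_n$ acts through Galois automorphisms on a Galois-stable filtration). Hence every connecting and functorial map in the long exact sequence attached to $0\to W_0\to W_1\to W_1/W_0\to 0$, and likewise to $0\to W_1\to W\to W/W_1\to 0$, is $\Delta_n$-equivariant. In particular, the map
\[
H^1(K_n,W_1)\lra H^1(K_n,W_1/W_0)
\]
is $\Delta_n$-equivariant, so $H^1_{\ur}(K_n,W_1)$, being the preimage of the $\Delta_n$-stable subspace $H^1_{nr}(K_n,W_1/W_0)$, is itself $\Delta_n$-stable. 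Similarly, the map $H^1(K_n,W_1)\to H^1(K_n,W)$ induced by the inclusion $W_1\hookrightarrow W$ is $\Delta_n$-equivariant, and therefore $H^1_{\ord}(K_n,\Ad^0\bar\rho)$, defined as the image of $H^1_{\ur}(K_n,W_1)$ under this map, is $\Delta_n$-stable as well.

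The only potentially delicate point is the $\Delta_n$-equivariance of restriction to inertia; but as observed above this is automatic once one notes that $I_{K_n}$ is normal in $G_{K_0}$. Beyond that, the argument is purely formal from the functoriality of cohomology applied to the $\Delta_n$-stable filtration $W_\bullet$.
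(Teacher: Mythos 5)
Your proof is correct and follows the same route as the paper, which simply asserts that the $\Delta_n$-invariance of the inertia subgroup $I_{K_n}$ gives stability of $H^1_{nr}(K_n,W_1/W_0)$ and that the other two cases reduce to this one; you have merely spelled out the functoriality-from-the-stable-filtration argument that the paper leaves implicit. One small caveat: the parenthetical claim that the inertia subgroup is a characteristic subgroup of $G_{K_n}$ is unnecessary (and not entirely immediate); the cleaner identification $I_{K_n}=G_{K_n}\cap I_{K_0}$, which you also give, is all that is needed.
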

   \begin{proof}
   It suffices to check this for the first space, and this is obvious as the inertia $I_n$ is invariant by $\Delta_n$.
   
   \end{proof}
   
   In $k[\Delta_n]$, the space of $\Delta_n$-invariants is $$\bigg{\lbrace} f= x \cdot \sum_{\Delta_n} \delta \big{|} x \in k \bigg{\rbrace} .$$ 
   
   The space $H^0(\Delta_n, k[\Delta_n]^d)$ is the sum of these lines. If $j_1, j_2$ are two injections of the trivial $\Delta_n$-module into $H^0(\Delta_n,k[\Delta_n]^d)$, it follows that there is a $\Delta_n$-equivariant isomorphism of $k[\Delta_n]^d$ conjugating them. We write $k[\Delta_n]^d /k$ for the quotient, independent of the map up to isomorphism as a $k[\Delta_n]$-module.
 
 \begin{lem}\label{spl}
 $H^1_{\ord}(K_n, \Ad^0 \bar{\rho})$ is isomorphic, as a $k[\Delta_n]$-module, to  $$(k[\Delta_n]^d /k) \oplus k$$ with $k$ being the trivial $k[\Delta_n]$-module.
  \end{lem}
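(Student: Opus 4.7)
The plan is to identify $H^1_{\ord}(K_n, \Ad^0\bar{\rho})$ with $H^1_{\ur}(K_n, W_1)$ (via the injection $H^1(K_n, W_1) \hookrightarrow H^1(K_n, W)$ from \eqref{dlses}) and then analyze the latter through the $\Delta_n$-equivariant short exact sequence
\[
0 \to H^1(K_n, W_0)/H^0(K_n, W_1/W_0) \to H^1_{\ur}(K_n, W_1) \to H^1_{nr}(K_n, W_1/W_0) \to 0
\]
obtained by restricting the middle term of \eqref{udses} to unramified classes.

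The first task is to pin down the submodule. I would show $H^1(K_n, W_0) \cong k[\Delta_n]^d$ as $k[\Delta_n]$-modules, following the template of Lemma \ref{sec:freeness}. By Nakayama it suffices to see $\dim H_0(\Delta_n, H^1(K_n, W_0)) = d$; this dimension is dual to $\dim H^0(\Delta_n, H^1(K_n, W_0^*(1)))$, and inflation-restriction (applicable because $H^0(K_n, k[\varepsilon^{-2}]) = 0$, since $\varepsilon^2 \neq 1$ on $G_{K_n}$) identifies the latter with $\dim H^1(K, k[\varepsilon^{-2}])$. The Euler-Poincar\'e formula combined with $H^0(K, k[\varepsilon^{-2}]) = 0$ and $H^2(K, k[\varepsilon^{-2}]) \cong H^0(K, k[\omega\varepsilon^2])^* = 0$ (note $\omega\varepsilon^2$ is ramified, hence nontrivial) gives this dimension as $d$. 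The boundary $H^0(K_n, W_1/W_0) \hookrightarrow H^1(K_n, W_0)$ is $\Delta_n$-equivariant and embeds the trivial line into $(k[\Delta_n]^d)^{\Delta_n}$, so by the remark preceding the lemma the cokernel is $k[\Delta_n]^d/k$ as a $k[\Delta_n]$-module.

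Next, $H^1_{nr}(K_n, W_1/W_0) \cong \Hom_{\ur}(G_{K_n}, k) \cong k$ has trivial $\Delta_n$-action: since $K_n/K$ is totally ramified by condition (ram), $\Frob_{K_n}$ agrees with $\Frob_K$ and is $\Delta_n$-fixed, so the generating homomorphism ($\Frob_{K_n} \mapsto 1$) is preserved. To produce a $k[\Delta_n]$-equivariant splitting of the exact sequence, I would lift a generator of $H^1_{nr}(K, W_1/W_0)$ to a class $c \in H^1_{\ur}(K, W_1)$; this is possible because $H^1(K, W_1) \to H^1(K, W_1/W_0)$ is surjective (the $n = 0$ instance of \eqref{udses}). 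The restriction of $c$ to $G_{K_n}$ is automatically $\Delta_n$-invariant, remains unramified (since $I_{K_n} \subseteq I_K$), and maps to a generator of $H^1_{nr}(K_n, W_1/W_0)$ (same Frobenius). Sending $1 \in k$ to this restricted class gives the required splitting, yielding $H^1_{\ur}(K_n, W_1) \cong (k[\Delta_n]^d/k) \oplus k$.

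The principal technical input is the freeness computation in the first step (mirroring Lemma \ref{sec:freeness}); the splitting, while the structural heart of the argument, is essentially formal once one observes that the total ramification of $K_n/K$ implies both that the Frobenius is preserved and that unramified classes over $K$ restrict to unramified classes over $K_n$.
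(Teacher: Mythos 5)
Your proof is correct and follows essentially the same route as the paper: the same exact sequence $0 \to H^1(K_n,W_0)/\Im H^0 \to H^1_{\ur}(K_n,W_1) \to H^1_{nr}(K_n,W_1/W_0) \to 0$, the same Nakayama/Tate-duality/inflation-restriction computation showing $H^1(K_n,W_0) \cong k[\Delta_n]^d$, the same identification of the sub- and quotient lines as trivial modules, and the same splitting by lifting a generator at level $0$ and restricting to $K_n$. The only addition is that you spell out the Euler--Poincar\'e and Tate-duality inputs that the paper leaves implicit by its reference to ``the argument given for Lemma~\ref{sec:freeness}.''
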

 \begin{proof}  
    Indeed the exact sequence (\ref{udses}) yields first
    \begin{equation}\label{urses}
    0 \rightarrow H^1(K_n, W_0)/\Im H^0(K_n, W_1/W_0) \rightarrow H^1_{\ur}(K_n, W_1) \rightarrow H^1_{\mathrm{nr}}(K_n, W_1/W_0) \rightarrow 0
   \end{equation}
 with $H^1_{\ur}(K_n, W_1) \cong H^1_{\ord}(K_n, \Ad^0\bar{\rho})$ and the dimensions being $(p^nd-1, p^n d, 1)$. The argument given for Lemma \ref{sec:freeness} shows that $H^1(K_n, W_0)$ is free of rank $d$ over $k[\Delta_n]$. 
     
     Recall that $W_1/W_0$ is the trivial module (for $G_{K_0}$). It follows that $$H^1_{\mathrm{nr}}(K_n, W_1/W_0) = \Hom (U,  W_1/W_0),$$ where $U= \Gal(K_n^{\mathrm{nr}}/K_n)= \Gal(K_0^{\mathrm{nr}}/K_0)$, is the trivial module for $k[\Delta_n]$. Similarly,  the image of $H^0(K_n, W_1/W_0) \cong k$ is trivial. 
     
     Finally, the exact sequence is split: by the previous argument computing $H^1_{\mathrm{nr}}(K_n, W_1/W_0)$ we can fix an element $\alpha \in H^1_{\mathrm{nr}}(K_0, W_1/W_0)$ that is a basis of $H^1_{\mathrm{nr}}(K_n, W_1/W_0)$. We then lift it to $\beta \in H^1_{\ur}(K_0, W_1)$: its restriction to $K_n$ is an element $\beta_n \in H^1_{\ur}(K_n, W_1)$ that is $\Delta_n$-invariant. 
     
\end{proof}

 Consider now $\pi: \Delta_{n+1} \twoheadrightarrow \Delta_n$. This induces a natural map $k[\Delta_n] \hookrightarrow k[\Delta_{n+1}]$, $f(\delta) \mapsto f(\pi \delta),$ dual to the projection of Iwasawa theory. It is equivariant under the action of $\Delta_{n+1}$, acting on $k[\Delta_n]$ via the quotient map.
 
 \begin{lem}\label{ocp}
 The restriction $H^1_{\ord}(K_n, \Ad^0 \bar{\rho}) \rightarrow H^1_{\ord}(K_{n+1}, \Ad^0 \bar{\rho})$ is injective. It is compatible with the splitting of Lemma \ref{spl}, and equivariant for the action of $\Delta_{n+1}$. 
 \end{lem}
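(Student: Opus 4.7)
The plan is to derive all three assertions from the functoriality of the exact sequence (\ref{urses}). First I will write (\ref{urses}) at levels $K_n$ and $K_{n+1}$ and connect them via restriction, producing a commutative diagram with exact rows and three vertical restriction maps. I then show that the two outer vertical maps are injective, invoke the five lemma for the middle, and read off the remaining compatibilities from the explicit construction of Lemma \ref{spl}.

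For the right-hand column, $W_1/W_0$ is the trivial $G_K$-module, so $H^1_{nr}(K_m, W_1/W_0) = \Hom(\Gal(K_m^{\ur}/K_m), k) \cong k$; since $K_{n+1}/K_0$ is totally ramified, $K_n$ and $K_{n+1}$ share the residue field of $K_0$, the Frobenius generators are identified, and restriction is an isomorphism. For the left-hand column I will check that $\omega\varepsilon^2 \neq 1$ on $G_{K_{n+1}}$: the character $\omega$ takes values in $\BF_p^{\times}$, of order prime to $p$, so its restriction to the $p$-extension $G_{K_{n+1}}$ has the same image as on $G_{K_0}$; the latter is ramified because $F_{\mathfrak{p}}$ is an unramified extension of $\BQ_p$ and hence cannot contain $\mu_p$; on the other hand $\varepsilon^2$ is unramified, so the product must remain ramified and in particular nontrivial. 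Hence $H^0(K_{n+1}, W_0) = 0$ and inflation-restriction gives injectivity of $H^1(K_n, W_0) \to H^1(K_{n+1}, W_0)$. Passing to the quotient by the image of $H^0(K_m, W_1/W_0) \cong k$ (a trivial $G_K$-line whose restriction from level $n$ to level $n+1$ is the identity) preserves injectivity.

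The five lemma then yields $H^1_{\ur}(K_n, W_1) \hookrightarrow H^1_{\ur}(K_{n+1}, W_1)$, which via the injection in (\ref{dlses}) identifies with the desired map $H^1_{\ord}(K_n, \Ad^0\bar{\rho}) \hookrightarrow H^1_{\ord}(K_{n+1}, \Ad^0\bar{\rho})$. Compatibility with the splitting of Lemma \ref{spl} will be automatic from its construction: the trivial $k$-summand there was generated by $\beta_m := \Res^{K_m}_{K_0}(\beta_0)$ for a once-and-for-all choice of $\beta_0 \in H^1_{\ur}(K_0, W_1)$, so $\Res^{K_{n+1}}_{K_n}(\beta_n) = \beta_{n+1}$, while the complementary summand is the quotient $H^1(K_m, W_0)/k$, exactly the left column of our diagram. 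The $\Delta_{n+1}$-equivariance, with $\Delta_{n+1}$ acting on $H^1(K_n,\cdot)$ through the projection $\Delta_{n+1} \twoheadrightarrow \Delta_n$, is the standard functoriality of restriction in Galois cohomology.

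The only substantive point is the ramification argument showing $\omega\varepsilon^2|_{G_{K_{n+1}}} \neq 1$; everything else is routine diagram-chasing.
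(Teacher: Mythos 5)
Your argument is correct and follows essentially the same route as the paper: compare the exact sequences (\ref{urses}) at levels $n$ and $n+1$, observe the right column is an isomorphism (shared unramified quotient), reduce to injectivity of $H^1(K_n,W_0)\to H^1(K_{n+1},W_0)$ via inflation–restriction, and read off compatibility with the splitting from the fact that $\beta_{n+1}=\Res\,\beta_n$. The paper is terser on the nonvanishing of $\omega\varepsilon^2$ on $G_{K_{n+1}}$ (which you spell out) and, conversely, goes further by making the left-column map explicit as $k[\Delta_n]^d\hookrightarrow k[\Delta_{n+1}]^d$ modulo a line, a description it reuses in \S 2.2; but the substance is the same.
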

 \begin{proof}
 Write $0 \rightarrow H'_n \rightarrow H_n \rightarrow L_n \rightarrow 0$ for the exact sequence  (\ref{urses}), with $H_n = H^1_{\ur}(K_n, W_1) \cong H^1_{\ord}(K_n, \Ad^0\bar{\rho})$. We get natural maps

   \begin{diagram}
 0 &\rightarrow &H'_n  &\rightarrow & H_{n} &\rightarrow &L_{n} &\rightarrow &0\\
 & & \big{\downarrow} & &\big{\downarrow} & &\big{\downarrow}\\
0 &\rightarrow &H'_{n+1}  &\rightarrow & H_{n+1} &\rightarrow &L_{n+1} &\rightarrow &0\\
\end{diagram}

   As shown in the proof of Lemma \ref{spl}, $L_n= \Hom(U, W_1/W_0) = L_{n+1}$ since $\Gal (K_n^{\mathrm{nr}}/ K_n)= \Gal (K_{n+1}^{\mathrm{nr}}/ K_{n+1})$. We are reduced to looking at the map $\Res: H^1(K_n, W_0) \rightarrow H^1(K_{n+1}, W_0)$. Both spaces contain the line $k = \Im(H^0)$, on which restriction is an isomorphism. Finally, 
   
   $$H^1(K_n, W_0) \cong k[\Delta_n]^d, \qquad H^1(K_{n+1}, W_0) \cong k[\Delta_{n+1}]^d$$ 
by the exact analogue of Lemma \ref{lc1}. The two isomorphisms are respectively as modules over $\Delta_n$ and $\Delta_{n+1}$. As $W_0 = k[\varepsilon^2 \omega],$ $H^1(K_n, W_0) \rightarrow H^1(K_{n+1}, W_0)$ is injective. This proves the first part of the lemma.
   
   In fact we can be more precise. As in the proof of Lemma \ref{lc1}, $H^1(K_n, W_0) \cong k[\Delta_n]^d$ was deduced, through Nakayama's lemma, from $$H^0(\Delta_n, H^1(K_n, W_0^*(1))) \cong H^1(K_0, W_0^*(1)) \cong k^d,$$ dual to $H^1(K_0, W_0) \cong k^d$. The last isomorphism is independent of $n$. As a consequence, the restriction $H^1_{\ord} (K_n, \Ad^0(\bar{\rho})) \rightarrow H^1_{\ord} (K_{n+1}, \Ad^0(\bar{\rho}))$ is given (on the spaces $H'_n)$, in a suitable basis of the free modules, by taking the natural map
   $$k[\Delta_n]^d  \hookrightarrow  k[\Delta_{n+1}]^d$$
and quotienting through a line $\sum_1^d x_i \sum_{\Delta_n} \delta$, sent to $\sum_1^d x_i \sum_{\Delta_{n+1}} \delta$  $(x_i \in k).$ 
 
 The other assertions of the lemma are now clear.
 
 \end{proof}
 
 \subsection{Local cohomology, dualised}
 
 \vskip2mm
 
 We now use the Tate pairing
  $$ H^1(K_n, \Ad^0\bar{\rho}) \times H^1(K_n, \Ad^0\bar{\rho}(1)) \rightarrow k.$$

   Let $ H^1_{\ord, \bot} \subset H^1(K_n, \Ad^0\bar{\rho}(1))$ be the orthogonal space of $H^1_{\ord}$. We set
$$ H^1_{\ord, *}(K_n, \Ad^0\bar{\rho}(1)) =  H^1(K_n, \Ad^0\bar{\rho}(1))/ H^1_{\ord,\bot}.$$
      So this is naturally dual to $H^1_{\ord}$. When $K_n$ is concerned, we write $H^1_{\ord , n}$ etc. We can take the limit of these spaces under corestriction. In fact we obtain naturally a diagram
\begin{diagram}
 H^1_{\ord, *, n+1} &\cong &(H^1_{\ord, n+1})^*\\
\big{ \downarrow} & &\big{\downarrow}\\
 H^1_{\ord, *, n} &\cong  &(H^1_{\ord, n})^*\\
\end{diagram}
 where the surjection on the right comes from the previous injection (Lemma \ref{ocp}) and the surjection on the left completes the diagram. 
 We must however check that this is given by corestriction on the left: i.e., that for $\beta \in H^1_{\ord, *, n+1}$ and $\alpha \in H^1_{\ord,n}$, 
 $$
 ( \Cor ~ \beta, \alpha ) = ( \beta, \Res~ \alpha ) \in (1/p)\BZ/\BZ = \BF_p.
 $$ 
 (We assume $k= \BF_p$; in  general an easy argument of restriction of scalars reduces to this case.) 
 
 The duality is given by the cup-product, with values in $H^2(K, \mu_{p^\infty}) [p] = (1/p)\BZ/\BZ = \BF_p. $ The general formula is $\Cor(\beta \cup \Res \alpha) = \Cor \beta \cup \alpha.$ For the canonical identification of $\Br(K)$ with $\BQ/ \BZ$, the restriction $\Br(K_n) \rightarrow \Br(K_{n+1})$ is given by $\alpha \mapsto p \alpha$ (cf.~\cite[XIII,~\S3]{Se}); on the other hand $\Cor \circ \Res : \Br(K_n) \rightarrow \Br(K_{n})$ is also  $\alpha \mapsto p  \alpha$. Thus $\Cor(p\alpha) = p \alpha$ for $\alpha \in \Br(K_{n+1})$ and $\Cor: H^2(K_{n+1}, \mu_{p^\infty}) [p]\rightarrow H^2(K_n, \mu_{p^\infty}) [p]$ is bijective\footnote{This is certainly well-known but we could not find a reference.}.
 
 \vskip2mm
 
 We now dualise the expression of $H^1_{\ord}(K_n, \Ad^0\bar{\rho})$ obtained in Lemma \ref{spl}. As in the proof of Lemma \ref{sec:freeness}, write $M_n$ for $H^1(K_n, \Ad^0\bar{\rho}(1))$ 
 and $M_n^0$ for $M_n/k$. Thus
 
$$H^1_{\ord}(K_n, \Ad^0\bar{\rho})^* \cong (M_n^0 \oplus k)^* \cong  (M_n^0)^* \oplus k,$$
 and
 $$0 \rightarrow E_n \rightarrow  k[\Delta_n]^d \rightarrow M_n^0 \rightarrow 0 $$ 
where $E_n \cong k$.

If we restrict to $K_{n+1}$, the corresponding map $k \rightarrow k$ is an isomorphism as was seen in the proof of Lemma \ref{spl}. We can now choose the line $E_n$ equal to $(e_n, 0,...,0) \in k[\Delta_n]^d$ with $e_n= \sum_{\delta \in \Delta_n} \delta.$ Then $(k[\Delta_n]/E_n)^* = I_n$ is the augmentation ideal of $k[\Delta_n]$. We obtain
 
  $$\varprojlim H^1_{\ord,*,n} = \Omega^{d-1} \oplus \varprojlim I_n \oplus k.$$ 
The limit of the augmentation ideals is nothing but the augmentation ideal in $\Omega$: 
$$I= T \cdot k[\![T]\!] \subset k[\![T]\!] = \Omega.$$ Thus we have proved:
  
  \begin{lem}\label{spld} 
 As an $\Omega$-module,
     $$\varprojlim H^1_{\ord,*,n}(K_n, \Ad^0\bar{\rho}(1)) \cong \Omega^d \oplus k .$$
  \end{lem}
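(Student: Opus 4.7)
The strategy is to compute the inverse limit termwise: first dualize Lemma~\ref{spl} to give a concrete $k[\Delta_n]$-module description of each $H^1_{\ord,*,n}$, then trace the transition maps using Lemma~\ref{ocp}, and finally identify the resulting $\Omega$-module with $\Omega^d\oplus k$.

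First I would dualize Lemma~\ref{spl}. The group algebra $k[\Delta_n]$ is self-dual as a $k[\Delta_n]$-module via the symmetric pairing $\langle f_1,f_2\rangle=\sum_\delta f_1(\delta)f_2(\delta^{-1})$, and under this self-duality the line $ke_n$ spanned by the norm element $e_n=\sum_{\delta\in\Delta_n}\delta$ corresponds to the augmentation $\epsilon$. Hence the short exact sequence $0\to ke_n\to k[\Delta_n]\to k[\Delta_n]/ke_n\to 0$ dualizes to $0\to I_n\to k[\Delta_n]\to k\to 0$, where $I_n=\ker\epsilon$ is the augmentation ideal. Applying this in the first coordinate of the free summand in Lemma~\ref{spl} and appending the trivial $k$-summand yields the $k[\Delta_n]$-module isomorphism
\[
H^1_{\ord,*,n}(K_n,\Ad^0\bar\rho(1))\;\cong\;I_n\oplus k[\Delta_n]^{d-1}\oplus k.
\]

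Next I would trace the transition maps. By Lemma~\ref{ocp}, restriction on $H^1_{\ord}$ is the natural injection $k[\Delta_n]^d\hookrightarrow k[\Delta_{n+1}]^d$ (dual to $\Delta_{n+1}\twoheadrightarrow\Delta_n$) on the free part, sends $E_n=ke_n^{(1)}$ to $E_{n+1}=ke_{n+1}^{(1)}$ since $e_n\mapsto e_{n+1}$, and is the identity on the trivial $k$-summand. Dualizing, corestriction is the trace $\RTr:k[\Delta_{n+1}]\to k[\Delta_n]$ on the free part and the identity on $k$; it sends $I_{n+1}$ to $I_n$ since traces are compatible with augmentations.

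Finally, taking the inverse limit termwise gives $\varprojlim k[\Delta_n]^{d-1}=\Omega^{d-1}$, $\varprojlim I_n=\ker(\epsilon:\Omega\to k)=T\cdot k[[T]]=I$, and the trivial summand stays $k$, so $\varprojlim H^1_{\ord,*,n}\cong I\oplus\Omega^{d-1}\oplus k$. Since $\Omega=k[[T]]$ is a domain, multiplication by $T$ gives an $\Omega$-linear isomorphism $\Omega\xrightarrow{\sim} I$, so $I\cong\Omega$ and the total inverse limit becomes $\Omega^d\oplus k$, as required. The only delicate point is checking that the decomposition is compatible with corestriction so the limit may be taken summand-by-summand, but this is exactly what Lemma~\ref{ocp} guarantees together with the compatibility of the chosen lines $E_n$.
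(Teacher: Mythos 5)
Your proof is correct and follows essentially the same route as the paper: dualize Lemma~\ref{spl} termwise using the self-duality of $k[\Delta_n]$ (so that the norm line dualizes to the augmentation ideal, giving $H^1_{\ord,*,n}\cong I_n\oplus k[\Delta_n]^{d-1}\oplus k$), verify via Lemma~\ref{ocp} that the corestriction transition maps respect this decomposition, and pass to the limit summand-by-summand with $\varprojlim I_n = I = T\cdot\Omega \cong \Omega$. You usefully make explicit the final step $I\cong\Omega$ (multiplication by $T$ in the domain $k[[T]]$), which the paper leaves implicit in going from $\Omega^{d-1}\oplus I\oplus k$ to $\Omega^d\oplus k$.
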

     
 \section{Ordinary global Galois cohomology}\label{s:ordinary}
 In this section we return to the global setup of ordinary deformation rings in \S\ref{s:Intro}.
\subsection{Tangent and obstruction space}\label{ss:tg}We will now compute, first for fixed $n$, the tangent and obstruction space of the ordinary deformation space for $\bar{\rho} \vert _{F_n}$. 

Note that we are looking at deformations with fixed determinant. The tangent and obstruction space are  then $H^1_{\ord}(\Gamma_n, \Ad^0(\bar{\rho}))$ and $H^2_{\ord}(\Gamma_n, \Ad^0(\bar{\rho}))$, which are given by the following exact sequence (see \cite[\S 2.2]{CHT}; recall that we are considering unrestricted deformations at the places in $S$ away from $p$):

     \vskip2mm
     \begin{equation}
\begin{aligned}  \label{lim}
0\rightarrow H^1_{\ord}(\Gamma_n, \Ad^0\bar{\rho}) \rightarrow  H^1(\Gamma_n, \Ad^0\bar{\rho}) \rightarrow \bigoplus _{\mathfrak{p}}  H^1(F_{n,\mathfrak{p}}, \Ad^0\bar{\rho})/  H^1_{\ord}(F_{n, \mathfrak{p}}, \Ad^0\bar{\rho}) \rightarrow 
H^2_{\ord}(\Gamma_n, \Ad^0\bar{\rho}) \rightarrow \\
H^2(\Gamma_n, \Ad^0\bar{\rho}) \rightarrow  \bigoplus _{\mathfrak{p}}  H^2(F_{n,\mathfrak{p}}, \Ad^0\bar{\rho}) \rightarrow H^3_{\ord}(\Gamma_n, \Ad^0\bar{\rho})   \rightarrow H^3(\Gamma_n, \Ad^0\bar{\rho}) \rightarrow 0 . 
\end{aligned}
\end{equation}
     
     \vskip2mm
     
 For the definition of $H^2_{\ord}$ and $H^3_{\ord}$ see \cite[Def.2.2.7]{CHT}. Note that restriction yields natural morphisms between these exact sequences relative to $F_n$ and $F_{n+1}$.

     \vskip2mm
     
     In particular, we obtain for the direct limits:
\begin{equation}   \label{limd}  
0 \rightarrow \varinjlim H^1_{\ord}(\Gamma_n) \rightarrow \varinjlim H^1(\Gamma_n) \rightarrow \bigoplus \varinjlim H^1(F_{n, \mathfrak{p}})/ H^1_{\ord}(F_{n, \mathfrak{p}}) \rightarrow \varinjlim H^2_{\ord}(\Gamma_n) \rightarrow ...
\end{equation}
where the coefficients are in $\Ad^0\bar{\rho}$. 
     
     The full cohomology spaces $H^i(\Gamma_n, \Ad^0\bar{\rho})$ can be fitted together by means of Shapiro's lemma:
     $$H^i(\Gamma_n, \Ad^0\bar{\rho})= H^i(\Gamma_0, \Ind_{\Gamma_n}^{\Gamma_0} \Ad^0 \bar{\rho})= H^i (\Gamma_0, \Ad^0\bar{\rho} \otimes k[\Delta_n])$$
since $\Ad^0\bar{\rho}$ extends to $\Gamma_0$. The group $\Gamma_0$ acts diagonally. 
     
     For $m\geq n$, the restriction map is then given by $k[\Delta_n] \hookrightarrow k[\Delta_m]$ (cf. before Lemma 2.6). Dually, the corestriction map : $H^i(\Gamma_m, \Ad^0\bar{\rho}) \rightarrow H^i(\Gamma_n, \Ad^0\bar{\rho})$ is then given by
     \begin{equation} \label{hi}
     H^i(\Gamma_0, \Ad^0\bar{\rho} \otimes k[\Delta_m])  \rightarrow H^i(\Gamma_0, \Ad^0\bar{\rho} \otimes k[\Delta_n])
    \end{equation}
(Cf. \cite[\S 6.3]{W}\footnote{Note that there the induced module is called coinduced.}) 
where $k[\Delta_m] \rightarrow k[\Delta_n] $ is the surjection defining the Iwasawa algebra.

     \subsection{Continuous Galois cohomology}\label{ss:ct} Before passing to the limit in (\ref{hi}), we must make some remarks on Galois cohomology. So far our Galois modules were discrete, and we were using the corresponding version of cohomology (cf.~\cite{Se1}). However (\ref{hi}) leads us to the limit

      $$\varprojlim k[\Delta_n] := \varprojlim \Omega_n =  \Omega,$$
seen as a $\Gamma_{0}$-module via $\Gamma \rightarrow \Delta.$ It is easy to see that this $\Gamma_{0}$-module is not discrete. On the other hand, if we endow $\Omega$ with its compact topology, $\Delta$ acts continuously.  We therefore consider the continuous cohomology $H^i_{\mathrm{ct}}(\Gamma_{0}, -)$ (cf.~\cite[II.7]{NSW}). 
     
     We now have, with $\Omega_n = k[\Delta_n]$:
     
     \begin{lem} \label{com}For all $i\geq 0$, there exists an exact sequence
     
     $$0 \rightarrow \sideset{}{^1}  \varprojlim H^{i-1}(\Gamma_{0}, \Ad^0\bar{\rho} \otimes \Omega_{n}) \rightarrow H^ i_{\mathrm{ct}}(\Gamma_{0}, \Ad^0 \bar{\rho} \otimes \Omega) \rightarrow  \varprojlim H^i (\Gamma_{0},\Ad^0 \bar{\rho} \otimes \Omega_n) \rightarrow 0 .$$
 (Cf. \cite[2.7.5 Theorem]{NSW}\footnote{This is a general result, cf \cite[p. 84]{W}.})      
     \end{lem}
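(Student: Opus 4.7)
The plan is to invoke the standard Milnor ($\varprojlim/\varprojlim^{1}$) exact sequence for an inverse system of cochain complexes, exactly as packaged in \cite[Thm.~2.7.5]{NSW}. The substance is to verify its hypotheses for our particular inverse system.

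Write $M_{n}=\Ad^{0}\bar{\rho}\otimes_{k}\Omega_{n}$ and $M=\Ad^{0}\bar{\rho}\otimes_{k}\Omega=\varprojlim M_{n}$. Since $\Omega_{n}=k[\Delta_{n}]$ is finite and $\Ad^{0}\bar{\rho}$ is finite-dimensional over $k$, each $M_{n}$ is a finite discrete $\Gamma_{0}$-module, the structure maps $M_{n+1}\twoheadrightarrow M_{n}$ are $\Gamma_{0}$-equivariant surjections, and $M$ carries the resulting profinite (compact) topology, on which $\Gamma_{0}$ acts continuously.

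First I would identify continuous cochains on $M$ with the inverse limit of cochains: the natural map
$$C^{\bullet}_{ct}(\Gamma_{0},M)\longrightarrow\varprojlim\nolimits_{n}C^{\bullet}_{ct}(\Gamma_{0},M_{n})$$
is an isomorphism of complexes of abelian groups. This is a formal consequence of the universal property of the inverse limit in topological spaces together with the compactness of each $\Gamma_{0}^{i}$ (any continuous map from a compact space into a profinite space is determined by its compositions with the finite quotients). Next I would verify that the inverse system $\{C^{i}_{ct}(\Gamma_{0},M_{n})\}_{n}$ has surjective (hence Mittag-Leffler) transition maps: a continuous cochain $f\colon\Gamma_{0}^{i}\to M_{n}$ factors through a finite quotient $\Gamma_{0}^{i}\twoheadrightarrow Q$, and any set-theoretic lift of $f|_{Q}$ through $M_{n+1}\twoheadrightarrow M_{n}$ is automatically continuous and defines a lift in $C^{i}_{ct}(\Gamma_{0},M_{n+1})$.

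With these two inputs in place, the standard Milnor short exact sequence for an inverse system of complexes of abelian groups with Mittag-Leffler transition maps produces, for each $i\geq 0$, a natural short exact sequence
$$0\longrightarrow\sideset{}{^{1}}\varprojlim\nolimits_{n}H^{i-1}(\Gamma_{0},M_{n})\longrightarrow H^{i}\bigl(\varprojlim\nolimits_{n}C^{\bullet}_{ct}(\Gamma_{0},M_{n})\bigr)\longrightarrow\varprojlim\nolimits_{n}H^{i}(\Gamma_{0},M_{n})\longrightarrow 0.$$
Combined with the identification $H^{i}(\varprojlim C^{\bullet}_{ct}(\Gamma_{0},M_{n}))=H^{i}_{ct}(\Gamma_{0},M)$ from the first step, and with $H^{\bullet}(\Gamma_{0},M_{n})$ being ordinary (discrete) cohomology since $M_{n}$ is finite, this is precisely the claimed exact sequence.

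The only genuine technical point is the compatibility of continuous cochains with the inverse limit; the rest is pure homological algebra. For $i=0$ the $\varprojlim^{1}$ term is of course zero by convention, and the sequence reduces to $H^{0}_{ct}(\Gamma_{0},M)=\varprojlim H^{0}(\Gamma_{0},M_{n})$, which is obvious.
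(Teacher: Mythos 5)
Your argument is correct and is precisely the content of the results the paper cites (\cite[Thm.~2.7.5]{NSW} and Weibel, Thm.~3.5.8): the paper gives no proof of its own but simply invokes the Milnor $\varprojlim/\varprojlim^{1}$ sequence for the inverse system of finite discrete modules $M_n=\Ad^0\bar\rho\otimes_k\Omega_n$, and your verification of the two hypotheses — that continuous cochains into $M=\varprojlim M_n$ are the inverse limit of cochains into the $M_n$, and that the transition maps on cochain groups are surjective (hence Mittag--Leffler) — is sound. As a minor simplification, the lifting step does not need the factorization through a finite quotient $Q$: choosing any set-theoretic section $s\colon M_n\to M_{n+1}$ and forming $s\circ f$ already yields a continuous lift, since the target $M_{n+1}$ is finite discrete; likewise the first identification is just the universal property of $\varprojlim$ in topological spaces and does not use compactness of $\Gamma_0^i$.
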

     
     \vskip2mm
     
     In our case,  the groups of continuous cohomology are limits of finite-dimensional vector spaces, so the Mittag-Leffler condition is satisfied and $ \sideset{}{^1}  \varprojlim $ vanishes \cite [Ex. 3.5.2]{W}. In particular,

     \begin{equation}\label{eq1}
     \varprojlim H^1(\Gamma_0, \Ad^0\bar{\rho} \otimes \Omega_{n}) = H^1_{\mathrm{ct}}(\Gamma_0, \Ad^0\bar{\rho} \otimes \Omega). 
     \end{equation}

  \section{Weak Leopoldt for adjoint}\label{s:WL_ad} 
  In this section we consider the vanishing of the second global Galois cohomology for adjoint over the cyclotomic tower. 
  
  \subsection{Weak Leopoldt I} In this subsection we consider the vanishing of the second global Galois cohomology for adjoint with rational coefficients over the cyclotomic tower.   
  
    Let the notation and hypotheses be as in \S\ref{s:Intro}-\S\ref{s:ordinary}. Let $\rho$ be a deformation of $\bar{\rho}$ over the ring of integers $A$ of a $p$-adic field;  we also denote by $\rho$ the corresponding rational representation, on a space $V$.  Let $W$ denote $\Ad^{0}\rho(1)$ or $\Ad^{0}\rho$ and $T \subset W$ a Galois-stable lattice. 
    
  \begin{prop}\label{p-adic-WL-abs} Suppose that
  \begin{itemize}
  \item[(i)] $H^{0}(F_{\fp}, W^{*}(1))=0$ for $\fp|p$ and
  \item[(ii)] the localisation $H^{1}_{\mathrm{f}}(F,W^{*}(1)) \rightarrow \bigoplus_{\fp|p} H^{1}_{\mathrm{f}}(F_{\fp},W^{*}(1))$ is injective.
  \end{itemize}
  Then, 
  $$
  \varinjlim_{n}H^{2}(\Gamma_{n},W/T)=0
  $$
   \end{prop}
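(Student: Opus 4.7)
My plan is to establish the equivalent vanishing $H^{2}(G_{F_\infty,S}, W/T) = 0$, where $G_{F_\infty,S} = \Gal(F_S/F_\infty)$. By exactness of direct limits on discrete cohomology, $\varinjlim_n H^2(\Gamma_n, W/T) \cong H^2(G_{F_\infty,S}, W/T)$. Following Perrin--Riou's approach to the weak Leopoldt conjecture, the first step is to apply the nine-term Poitou--Tate exact sequence to the finite quotients $T/p^N T$ over each layer $F_n$, then pass to the direct limit in $N$ and in $n$ (the latter converting $T^*(1)$-cohomology, Pontryagin-dually, into Iwasawa cohomology). This yields the four-term exact sequence
$$H^1_{\mathrm{Iw}}(F_\infty, T^*(1))^\vee \xrightarrow{\alpha} H^2(G_{F_\infty,S}, W/T) \to \bigoplus_{v} H^2(F_{\infty,v}, W/T) \xrightarrow{\beta} H^0(G_{F_\infty,S}, T^*(1))^\vee \to 0,$$
with $H^1_{\mathrm{Iw}}(F_\infty, T^*(1)) := \varprojlim_n H^1_{\mathrm{cts}}(G_{F_n,S}, T^*(1))$ and the local sum over places $v$ of $F_\infty$ above $S$ or above $p$. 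It then suffices to show $\alpha$ surjects onto $\ker(\beta)$.

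Next I would dispose of the local $H^2$-terms. For $v \nmid p$, $F_{\infty,v}/F_v$ is a $\BZ_p$-extension of a local field, so by Serre $\mathrm{cd}_p(G_{F_{\infty,v}}) \leq 1$ and the corresponding $H^2$ vanishes. For $v = \mathfrak{p} \mid p$, local Tate duality identifies $H^2(F_{\infty,\mathfrak{p}}, W/T)$ with $H^0(F_{\infty,\mathfrak{p}}, T^*(1))^\vee$; hypothesis (i) kills the $F_\mathfrak{p}$-fixed part of $W^*(1)$, and a pro-$p$ analysis of the remaining $\Gal(F_{\infty,\mathfrak{p}}/F_\mathfrak{p})$-characters acting on $T^*(1)$ reduces $\ker(\beta)$ to the Pontryagin dual of the cokernel of the diagonal localization $H^0(G_{F_\infty,S}, T^*(1)) \to \bigoplus_{\mathfrak{p}|p} H^0(F_{\infty,\mathfrak{p}}, T^*(1))$.

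Finally, I would identify $\alpha^\vee$ with the Bloch--Kato localization $H^1_f(F, V^*(1)) \to \bigoplus_{\mathfrak{p}|p} H^1_f(F_\mathfrak{p}, V^*(1))$ of hypothesis (ii), using the Greenberg/Panchishkin Selmer structure coming from the ordinary hypothesis on $\rho$, together with an Iwasawa-theoretic control theorem that bridges $H^1_{\mathrm{Iw}}(F_\infty, T^*(1))$ with the Bloch--Kato Selmer group of $V^*(1)$ over $F$. Under this identification, hypothesis (ii) translates into surjectivity of $\alpha$ onto $\ker(\beta)$, yielding $H^2(G_{F_\infty,S}, W/T) = 0$. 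The main obstacle is precisely this last identification: matching $\alpha^\vee$ with the Bloch--Kato localization requires unpacking the ordinary filtration into a Greenberg Selmer condition consistent with the limiting Poitou--Tate sequence, and a control theorem that passes from the $F$-level injectivity of (ii) to the $F_\infty$-level surjectivity of $\alpha$.
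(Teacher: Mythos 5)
The paper does not actually prove Proposition \ref{p-adic-WL-abs}: it defers entirely to Perrin--Riou \cite[Prop.\ B.5]{PR}. So there is no ``paper proof'' to compare against, and your attempt is a reconstruction. The ingredients you invoke --- exactness of direct limits to reduce to $H^2(G_{F_\infty,S},W/T)$, the Poitou--Tate nine-term sequence over the finite layers passed to the limit, local Tate duality for the $H^2$-terms, $\mathrm{cd}_p\le 1$ for the places away from $p$, and finally an identification of the first map with a Bloch--Kato localization --- are all in the right territory, and indeed resemble what goes into Perrin--Riou's argument.

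That said, there is a genuine logical gap in the reduction you propose. In your four-term exact sequence
\[
A\xrightarrow{\alpha} B\xrightarrow{\gamma} C\xrightarrow{\beta} D\to 0,\qquad B=H^2(G_{F_\infty,S},W/T),
\]
the map $\alpha$ lands in $B$ while $\ker(\beta)$ is a subgroup of $C$; the phrase ``$\alpha$ surjects onto $\ker(\beta)$'' does not parse, and in any case surjectivity of $\alpha$ would not give $B=0$. What exactness actually gives is the short exact sequence $0\to\mathrm{im}(\alpha)\to B\to\ker(\beta)\to 0$, so the correct sufficient condition is \emph{both} $\mathrm{im}(\alpha)=0$ \emph{and} $\ker(\beta)=0$. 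Your disposal of the local $H^2$'s is aimed at $\ker(\beta)$, which is fine, but you never formulate --- let alone prove --- that $\alpha$ vanishes (equivalently, that the next-to-last localization map $H^1(G_{F_\infty,S},W/T)\to\bigoplus_v H^1(F_{\infty,v},W/T)$ is surjective after dualizing appropriately), and it is precisely this vanishing that hypothesis (ii) should feed into.

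You also flag honestly that ``the main obstacle is precisely this last identification'' of $\alpha^\vee$ with the Bloch--Kato localization together with a control theorem descending the injectivity of (ii) from $F$ to $F_\infty$. That is indeed the heart of the matter, and as submitted the proposal does not contain it. Concretely, two things are missing: (a) a precise statement of what must be proved about $\alpha$ (namely $\mathrm{im}(\alpha)=0$, not surjectivity onto $\ker(\beta)$), and (b) the control/Iwasawa-descent argument linking the $F$-level Bloch--Kato hypothesis (ii) to the $F_\infty$-level Iwasawa cohomology term $H^1_{\mathrm{Iw}}(F_\infty,T^*(1))$. One further caution on the local side: hypothesis (i) gives $H^0(F_{\fp},W^*(1))=0$ at the base, but you need to know the behaviour of $H^0(F_{\infty,\fp},T^*(1))$ after the cyclotomic twist, and the ``pro-$p$ analysis of characters'' you allude to should be spelled out rather than asserted.
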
 
  (See Perrin-Riou \cite[Prop. B.5]{PR}).
 
  \begin{remark} The above criteria for weak Leopoldt holds rather generally (cf.~\cite{PR}). 
  \end{remark}
 
  In view of Allen's result \cite[Thm. B]{Al}, we deduce the following. 
  \begin{cor}\label{p-adic-WL-aut} 
  Suppose that $W=\Ad^0\rho$ or $\Ad^0\rho(1)$ and 
   \begin{itemize}
  \item[(Aut)] $\rho$ is automorphic and 
  \item[(ad$_{F(\zeta_{p})}$)] $\bar{\rho}|_{G_{F(\zeta_{p})}}$ is adequate (\cite[Def. 3.1.1]{Al}).
 \end{itemize} 
  Then, 
  $$
  \varinjlim_{n}H^{2}(\Gamma_{n},W/T)=0.
  $$
 
  \end{cor}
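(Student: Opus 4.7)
The plan is to apply Proposition \ref{p-adic-WL-abs} directly, which reduces the problem to verifying its hypotheses (i) and (ii) for both choices $W=\Ad^{0}(\rho)$ and $W=\Ad^{0}(\rho)(1)$. Since $\Ad^{0}(\rho)$ is self-dual under the trace pairing, $W^{*}(1)$ is $\Ad^{0}(\rho)(1)$ in the first case and $\Ad^{0}(\rho)$ in the second, so the two cases are interchanged by the Tate twist.

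For hypothesis (i), I would first reduce to the residual representation by a standard lifting argument and use the three-step filtration $W_{0}\subset W_{1}\subset W$ of $\Ad^{0}(\bar{\rho})|_{G_{F_{\fp}}}$ recalled in \S2.1. Its graded pieces carry the characters $\varepsilon^{2}\omega$, trivial, and $\varepsilon^{-2}\omega^{-1}$. Since $F$ is unramified at $p$ and does not contain $\zeta_{p}$, $\omega$ is both ramified and non-trivial at every $\fp \mid p$, so after twisting, all graded characters appearing in $W^{*}(1)|_{F_{\fp}}$ are non-trivial except possibly the one coming from $W_{1}/W_{0}$. That potential contribution is ruled out by (NS): a non-zero $G_{F_{\fp}}$-invariant in $W_{1}$ would produce a splitting of the extension witnessing the indecomposability of $\bar{\rho}|_{G_{F_{\fp}}}$. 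Chasing the long exact sequence attached to the filtration then gives $H^{0}(F_{\fp}, W^{*}(1)) = 0$.

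For hypothesis (ii), I would invoke Allen's \cite[Thm.~B]{Al}: under the assumptions (Aut) and (ad$_{F(\zeta_{p})}$), the adjoint Bloch--Kato Selmer group $H^{1}_{f}(F, \Ad^{0}(\rho))$ vanishes, and by Poitou--Tate duality matching the Bloch--Kato local conditions at $p$ the same holds for $H^{1}_{f}(F, \Ad^{0}(\rho)(1))$. Consequently $H^{1}_{f}(F, W^{*}(1)) = 0$ in both cases, so the localisation map in (ii) is trivially injective. Proposition \ref{p-adic-WL-abs} then delivers $\varinjlim_{n} H^{2}(\Gamma_{n}, W/T) = 0$.

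The main obstacle is ensuring that Allen's theorem, as stated in \cite{Al}, covers both the untwisted and the cyclotomically twisted adjoint under the single hypothesis (ad$_{F(\zeta_{p})}$). If it is phrased only for one of the two twists, the other should follow from the Greenberg--Wiles formula together with the vanishing of $H^{0}$ at the primes over $p$ established in step (i); the passage from residual to integral coefficients in (i) is routine and not expected to cause trouble.
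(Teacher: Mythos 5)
Your overall structure — reduce to Proposition~\ref{p-adic-WL-abs} and verify its two hypotheses — matches the paper, and your verification of hypothesis~(i) via the filtration $W_0 \subset W_1 \subset W$ of \S 2.1 and (NS) is a correct and more explicit version of what the paper dismisses with ``follows from our assumptions in \S 2.2.'' But you diverge at the key step, and the divergence opens a gap. The paper verifies Proposition~\ref{p-adic-WL-abs} for \emph{one} twist only, using Allen's Theorem~B to get $H^1_f(F, \Ad^0(\rho))=0$, and then concludes the other twist by invoking Perrin-Riou \cite[1.3.3]{PR}: the conclusion of Proposition~\ref{p-adic-WL-abs} (weak Leopoldt) is stable under Tate twist $W \rightsquigarrow W(j)$. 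This is the tool you are missing. Since the cyclotomic character becomes trivial along $F_\infty$, this twist-invariance is exactly the right structural fact to use, and it completely sidesteps the need for any statement about the \emph{other} adjoint Selmer group.

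Your substitute — that ``by Poitou--Tate duality matching the Bloch--Kato local conditions at $p$ the same holds for $H^1_f(F, \Ad^0(\rho)(1))$'' — does not follow as stated, and the hedge in your last paragraph does not close it. The Greenberg--Wiles formula relates $\dim H^1_f(F, V)$ to $\dim H^1_f(F, V^*(1))$ up to a sum of local Euler terms, and those terms do not cancel here: at each archimedean place the $c$-eigenvalues on $\Ad^0\bar{\rho}$ are $(-1,-1,+1)$ while on $\Ad^0\bar{\rho}(1)$ they are $(+1,+1,-1)$ (the paper uses exactly this in the proof of Proposition~\ref{free}), so the archimedean contributions differ, and the contributions at $\fp \mid p$ involve $\dim D_{\dR}/\mathrm{Fil}^0$, which also changes under the twist because the Hodge--Tate weights shift. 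So $H^1_f(F, \Ad^0(\rho))=0$ does not by itself yield $H^1_f(F, \Ad^0(\rho)(1))=0$ through duality bookkeeping; one would need a genuinely separate argument (and Allen's Theorem~B, as cited, is stated for the untwisted adjoint). Replacing that paragraph by the citation of \cite[1.3.3]{PR} repairs the proof and recovers the paper's argument.
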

  \begin{proof}
   The first hypothesis in Proposition \ref{p-adic-WL-abs} follows from our assumptions on $\bar{\rho}$ (\S\ref{ss:mr}).  
  
From \cite[Thm. B]{Al}, we have 
  $$
  H^{1}_{\mathrm{f}}(F,\Ad^{0}\rho^{*}(1))=0.
  $$
  
  We thus conclude 
   $$
  \varinjlim_{n}H^{2}(\Gamma_{n},\Ad^{0}\rho/T)=0.
  $$  
  As weak Leopoldt  (i.e., the conclusion of Proposition \ref{p-adic-WL-abs}) for a $p$-adic Galois representation $W$ implies the same for $W(j)$ with $j \in \BZ$ (\cite[1.3.3]{PR}), this finishes the proof. 
  \end{proof}
  \begin{remark} 
  \noindent
  \begin{itemize}
  \item[(1)] For $p > 5$, adequacy is equivalent to absolute irreducibility (\cite[Thm. A.9]{Th}).
  \item[(2)] The automorphy hypothesis (Aut) can be replaced with an analogous one involving potential automorphy (\cite[Thm.B]{Al}). Such a potential automorphy is indeed available under mild hypotheses (\cite[Thm. 4.5.2]{BLGGT}). 
  \end{itemize}

  \end{remark}

  \subsection{Weak Leopoldt II}\label{ss:WL-II} In this subsection we consider the vanishing of the second global Galois cohomology for adjoint with mod $p$ coefficients over the cyclotomic tower.  
  
  Let the notation and hypotheses be as in \S4.1. Let 
  $$
  X^{1}(F,T)= (\varinjlim_{n}H^{1}(\Gamma_{n},W^{*}(1)/T^{*}(1)))^{*},
  $$
  
  cf. \cite [1.3.1]{PR}. Recall that these groups are $\Lambda$-modules of finite type (\cite{PR}, ibid.)
  
   \begin{prop}\label{mod p-WL-abs} The following are equivalent. 
  \begin{itemize}
  \item[(i)] $ \varinjlim_{n}H^{2}(\Gamma_{n},p^{-1}T/T)=0
  $  
   \item[(ii)] $\varinjlim_{n}H^{2}(\Gamma_{n},W/T)=0$ and $\mu(X^{1}(F,T^{*}(1))_{\tor})=0$   
  for $\mu(\cdot)$ the Iwasawa $\mu$-invariant\footnote{See for example \cite[\S2]{Su}}.  
   \end{itemize}
(\cite[p. 126]{PR}).
  \end{prop}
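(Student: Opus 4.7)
The plan is to play the coefficient sequence
$$0 \to p^{-1}T/T \to W/T \xrightarrow{\times p} W/T \to 0$$
against direct limits along the cyclotomic tower. Taking $\Gamma_n$-cohomology, extracting the four-term piece around $H^2$, and then passing to $\varinjlim_n$ (which is exact) yields
$$0 \to \varinjlim_n H^1(\Gamma_n,W/T)/p \to \varinjlim_n H^2(\Gamma_n,p^{-1}T/T) \to \varinjlim_n H^2(\Gamma_n,W/T)[p] \to 0.$$
Thus (i) is equivalent to the joint vanishing of the two outer terms, and the task is to identify each with a clause of (ii).

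The right-hand term is straightforward: each $H^i(\Gamma_n,W/T)$ is $p$-torsion (because $W/T$ is a divisible $p$-primary group), and a $p$-torsion abelian group vanishes iff its $p$-torsion subgroup does. Hence $\varinjlim_n H^2(\Gamma_n,W/T)[p]=0$ is equivalent to $\varinjlim_n H^2(\Gamma_n,W/T)=0$, the first clause of (ii).

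For the left-hand term, Pontryagin duality combined with the definition of $X^1$ gives the identification
$$\bigl(\varinjlim_n H^1(\Gamma_n,W/T)/p\bigr)^{*} \cong X^1(F,T^*(1))[p],$$
so the left term vanishes iff $X^1(F,T^*(1))[p]=0$. By the structure theorem for finitely generated $\Lambda$-modules, the condition $\mu(X_{\tor})=0$ is equivalent to $X_{\tor}$ having no $\Lambda/(p^a)$-summand (up to pseudo-isomorphism), equivalently $X[p]=X_{\tor}[p]$ is finite as a $k$-vector space. To upgrade finiteness to literal vanishing one needs $X^1(F,T^*(1))$ to contain no nonzero finite $\Lambda$-submodule, and this is a known consequence of the weak Leopoldt hypothesis $\varinjlim_n H^2(\Gamma_n,W/T)=0$ featured in (ii), via a Greenberg-style Poitou--Tate argument.

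Combining the two analyses proves (i) $\Leftrightarrow$ (ii) as a formal consequence of the displayed four-term exact sequence. The main obstacle, and the step where real Iwasawa-theoretic input enters, is precisely the no-nonzero-finite-$\Lambda$-submodule property for $X^1(F,T^*(1))$ under weak Leopoldt; it is this that bridges the purely algebraic equivalence ``$\mu(X_{\tor})=0 \Leftrightarrow X[p]$ finite'' and the sharper ``$X[p]=0$'' needed to annihilate $\varinjlim_n H^2(\Gamma_n,p^{-1}T/T)$.
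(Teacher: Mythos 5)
Since the paper offers no proof of this Proposition and simply cites \cite[p.~126]{PR}, your proposal is a reconstruction of the argument rather than a competitor, and it is correct. The four-term exact sequence extracted from the coefficient sequence $0 \to p^{-1}T/T \to W/T \xrightarrow{p} W/T \to 0$ after passing to $\varinjlim_n$, the elementary identification of the right-hand term (a $p$-primary torsion group vanishes iff its $p$-torsion does, giving the first clause of (ii)), and the Pontryagin-duality identification
$\bigl(\varinjlim_n H^1(\Gamma_n, W/T)/p\bigr)^{*} \cong X^1(F, T^*(1))[p]$
are all correct and are surely the spine of the argument in \cite{PR}. You also correctly isolate the one genuinely Iwasawa-theoretic input: the structure theorem gives only that $\mu(X^1_{\tor})=0$ is equivalent to finiteness of $X^1[p]=X^1_{\tor}[p]$, and promoting finiteness to vanishing requires that $X^1(F,T^*(1))$ contain no nonzero finite $\Lambda$-submodule. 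That is exactly the fact Perrin-Riou supplies on the cited page, and it is what the paper invokes directly in the proof of Corollary~4.5 (``From hypothesis ($\mu'$), the $\Lambda$-module $X^1(F,T^*(1))[p]$ is trivial''). The one quibble is your attribution of the no-finite-submodule property to the weak Leopoldt hypothesis: in Perrin-Riou's (and Greenberg's) treatment this is established as a structural property of $X^1$ under mild $H^0$-type hypotheses over $F_\infty$, not as a consequence of weak Leopoldt per se. This is a point of attribution rather than a gap --- weak Leopoldt is in any case available as part of (ii), and the hypothesis in question is satisfied in the adjoint setting --- so your argument closes correctly.
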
 
  
  \noindent 

\noindent
\begin{cor}\label{mod p-WL-aut} 
 
 Suppose that $W=\Ad^0\rho$ or $\Ad^0\rho(1)$ for an automorphic lift $\rho$ and $T \subset W$ is a stable lattice. Assume 
   \begin{itemize}   
  \item[(irr$_{F(\zeta_{p})}$)] $\bar{\rho}|_{G_{F(\zeta_{p})}}$ is irreducible and 
  \item[($\mu'$)] $\mu(X^{1}(F,T^{*}(1))_{\tor})=0$  
   \end{itemize} 
 
 Then, the dimensions

  $$
  \dim_{k} H^{2}(\Gamma_{n},p^{-1}T/T)
    $$
    are bounded as $n \rightarrow \infty$.
  \end{cor}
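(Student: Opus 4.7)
The plan is to reduce the boundedness claim to the vanishing machinery of Proposition \ref{mod p-WL-abs} by decomposing the cohomology via the $p$-multiplication sequence, and then to extract uniform bounds from the $\Lambda$-module structure of the relevant Iwasawa modules, using the $(\mu')$ hypothesis in place of the vanishing $H^{1}_{f}=0$ which adequacy would have supplied.

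First, I would exploit the short exact sequence of $\Gamma_{n}$-modules
$$0 \to T/pT \to W/T \xrightarrow{\cdot p} W/T \to 0,$$
together with the identification $p^{-1}T/T \cong T/pT$. The induced long exact sequence yields, at each level $n$,
$$0 \to H^{1}(\Gamma_{n}, W/T)/p \to H^{2}(\Gamma_{n}, p^{-1}T/T) \to H^{2}(\Gamma_{n}, W/T)[p] \to 0,$$
so that bounding $\dim_{k} H^{2}(\Gamma_{n}, p^{-1}T/T)$ reduces to bounding the two flanking terms uniformly in $n$.

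For the right-hand term, I would use that the Pontryagin dual of $\varinjlim_{n} H^{2}(\Gamma_{n}, W/T)$ is a finitely generated $\Lambda$-module (\cite[1.3.1]{PR}). The mechanism underlying Proposition \ref{mod p-WL-abs} tracks the $\mu$-invariant of the torsion part of this module back to that of $X^{1}(F, T^{*}(1))_{\tor}$, so hypothesis $(\mu')$ forces it to vanish. A standard control theorem for a finitely generated torsion $\Lambda$-module with $\mu=0$ then yields a uniform bound on $\dim_{k} H^{2}(\Gamma_{n}, W/T)[p]$. The left-hand term is treated by a parallel Poitou-Tate duality argument: $H^{1}(\Gamma_{n}, W/T)/p$ is identified, modulo bounded local-at-$p$ contributions already computed in \S2, with a piece of an Iwasawa quotient of $X^{1}(F, T^{*}(1))$; hypothesis (Aut) together with (irr$_{F(\zeta_{p})}$) ensures $H^{0}(\Gamma_{n}, \Ad^{0}\bar{\rho})=0$ and controls the irreducible constituents, while $(\mu')$ again suppresses the $p$-divisible growth, giving a uniform bound on $\dim_{k} H^{1}(\Gamma_{n}, W/T)/p$.

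The main obstacle will be this last step: without adequacy, Allen's vanishing $H^{1}_{f}(F,\Ad^{0}\rho)=0$ is not available, so one loses the clean outright vanishing of Corollary \ref{p-adic-WL-aut} and must instead track the structure invariants of $X^{1}(F, T^{*}(1))$ by hand. The point of the $(\mu')$ hypothesis is that it is tailored precisely to kill the $p$-divisible contribution visible after the mod-$p$ reduction, so that only a bounded piece — controlled by the $\lambda$-invariant and local corrections from \S2 — survives; assembling this into a uniform bound requires combining Proposition \ref{mod p-WL-abs} with the Iwasawa control theorem rather than invoking Corollary \ref{p-adic-WL-aut} directly.
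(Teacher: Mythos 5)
Your opening reduction — use the multiplication-by-$p$ sequence to reduce the claim to bounding $\dim_k H^1(\Gamma_n,W/T)/p$ and $\dim_k H^2(\Gamma_n,W/T)[p]$ — coincides exactly with the paper's first step. After that the proposal diverges, and in both branches there are genuine gaps.

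For $H^2(\Gamma_n,W/T)[p]$: the paper's proof explicitly invokes Corollary~\ref{p-adic-WL-aut} to get $\varinjlim_n H^2(\Gamma_n,W/T)=0$ (this uses Allen's vanishing $H^1_f(F,\Ad^0\rho)=0$ and hence adequacy), and only then applies the control isomorphism \cite[(1.3), p.~10]{PR} to identify $H^2(\Gamma_n,W/T)$ with $H^1(\Gamma_n,\varinjlim H^1)$, whose Pontryagin dual mod $p$ is $X^1(F,T^*(1))^{\Gal(F_\infty/F_n)}/p$; that has bounded $k$-dimension simply because $X^1$ is a finitely generated $\Lambda$-module. You correctly notice that the stated hypotheses list only (irr$_{F(\zeta_p)}$), and try to bypass Allen's theorem and run entirely on $(\mu')$, invoking ``the mechanism underlying Proposition~\ref{mod p-WL-abs}.'' But this does not close the argument: the PR control isomorphism (1.3) that converts the global $H^2$'s into Iwasawa modules is exactly what requires the vanishing of the direct limit, and $(\mu')$ by itself is an assertion about $X^1(F,T^*(1))_{\tor}$, not about the dual of $\varinjlim H^2$. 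Without the Allen-type input you have neither that the dual of $\varinjlim H^2$ is $\Lambda$-torsion nor the control map that relates level-$n$ cohomology to coinvariants of a fixed $\Lambda$-module, so no uniform bound follows. (The paper itself carries a cosmetic mismatch here — its proof uses Corollary~\ref{p-adic-WL-aut} and hence adequacy; cf.\ Remark 4.4 on adequacy versus irreducibility for $p>5$ — but the way to repair your sketch is to invoke that Corollary, not to assert that $(\mu')$ does the work.)

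For $H^1(\Gamma_n,W/T)/p$: you propose a Poitou–Tate duality computation involving ``local-at-$p$ contributions already computed in \S2.'' The paper's actual route is shorter and different: use the control isomorphism \cite[(1.2), p.~10]{PR} (which needs only (irr$_{F(\zeta_p)}$) to kill the $H^0$), dualize to land inside $X^1(F,T^*(1))_{\Gal(F_\infty/F_n)}[p]$, then use the $\Lambda$-structure theorem to compare $X^1[p]_{\Gal(F_\infty/F_n)}$ with $X^1_{\Gal(F_\infty/F_n)}[p]$ up to bounded error, and finally use $(\mu')$ to conclude that the $\Lambda$-module $X^1(F,T^*(1))[p]$ is trivial, hence its coinvariants are uniformly bounded. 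There is no Poitou–Tate step and no appeal to the local computations of \S2 in this part of the argument; introducing them is not wrong in spirit, but it leaves the crucial bound unproved unless you spell out how the local terms and global Selmer-type kernel are uniformly controlled, which your sketch does not do.

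In short: the reduction is right, but the method of bounding both terms is not the paper's and, as written, the $H^2[p]$ branch has a real hole (you need the vanishing of $\varinjlim H^2$, i.e.\ Allen's theorem via Corollary~\ref{p-adic-WL-aut}, before the control theorem can be applied), and the $H^1/p$ branch replaces a one-line appeal to \cite[(1.2)]{PR} plus the structure theorem with a vaguer duality argument that is not carried out.
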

  \begin{proof} 
  It suffices to show that the dimensions
  $$
  \dim_{k} H^{1}(\Gamma_{n},W/T)/p, \\\ \dim_{k}H^{2}(\Gamma_{n},W/T)[p]
   $$
   are bounded as $n \rightarrow \infty$. 
  
  \begin{itemize}
  \item From \cite[(1.2) p. 10]{PR} and (irr$_{F(\zeta_{p})})$, 
  \begin{equation}\label{ctlH1}
  H^{1}(\Gamma_{n},W/T) \simeq \big{(}\varinjlim H^{1}(\Gamma_{m},W/T)\big{)}^{\Gal(F_{\infty}/F_{n})}.
  \end{equation}
  Note that the Pontryagin dual of $\big{(}\varinjlim H^{1}(\Gamma_{m},W/T)\big{)} ^{\Gal(F_{\infty}/F_{n})}  /p$ is the $\BZ_{p}$-submodule of $X^{1}(F,T^{*}(1))_{\Gal(F_{\infty}/F_{n})}$ annihilated by $p$ (\cite[p. 126]{PR}). 
  
  In view of structure theorem for finitely generated $\Lambda$-modules, 
  $$
   X^{1}(F,T^{*}(1))[p]_{\Gal(F_{\infty}/F_{n})} \sim
   X^{1}(F,T^{*}(1))_{\Gal(F_{\infty}/F_{n})}[p].    
    $$
  Here `$\sim$'  denotes up to bounded kernel and cokernel. 
  
  From hypothesis ($\mu'$), the $\Lambda$-module
  $X^{1}(F,T^{*}(1))[p]$ is trivial (\cite[p. 126]{PR}). 
  Thus, the $k$-modules 
  $X^{1}(F,T^{*}(1))_{\Gal(F_{\infty}/F_{n})}[p]$ are bounded (for example, \cite[Prop. 2.3.1]{Gr}).   
  
  We conclude that the dimensions $
  \dim_{k} H^{1}(\Gamma_{n},W/T)/p$ are bounded. 
   
 \item   
  From Corollary 4.3,  
  \begin{equation}\label{vanH2}
  \varinjlim_{n}H^{2}(\Gamma_{n},W/T)=0, 
  \end{equation}
  
  Thus, from \cite[(1.3) p. 10]{PR}
  \begin{equation}\label{ctlH2}
  H^{1}(\Gamma_{n}, \varinjlim H^{1}(\Gamma_{m},W/T)) \simeq H^{2}(\Gamma_{n},W/T). \end{equation}
 
  Note that the Pontryagin dual of $H^{1}(\Gamma_{n}, \varinjlim H^{1}(\Gamma_{m},W/T))[p]$ is  
  $X^{1}(F,T^{*}(1))^{\Gal(F_{\infty}/F_{n})}/p$. (Use the exact sequences  (1.3) and (1.5) p.10,11 in  \cite{PR}). As $X^{1}(F,T^{*}(1))$ is a finitely generated $\Lambda$-module,  
  the $\BZ_p$-modules $X^{1}(F,T^{*}(1))^{\Gal(F_{\infty}/F_{n})}$ have bounded rank (\cite[p.~11]{PR}). 
  
  We conclude that the dimensions $\dim_{k}H^{2}(\Gamma_{n},W/T)[p]
   $ are bounded.  
   \end{itemize}
      \end{proof}
  
  \section{Main result} 
    In this section we consider the vanishing of the second ordinary global Galois cohomology for adjoint over the cyclotomic tower. 
    
        Let the notation and hypothesis be as in \S\ref{s:Intro}-\S\ref{s:ordinary}.  
        \begin{prop} \label{free} 
         Suppose that 
   \begin{itemize}
  \item[(Aut)] $\bar{\rho}$ is automorphic,   
  \item[(ad$_{F(\zeta_{p})}$)] $\bar{\rho}|_{G_{F(\zeta_{p})}}$ is adequate (\cite[Def. 3.1.1]{Al}) and 
  \item[($\mu$)] $\mu(X^{1}(F,T^{*}(1))_{\tor})=0=\mu(X^{1}(F,T)_{\tor})$ for $T$ corresponding to $\Ad^{0}(\rho)$ with $\rho$ arising from an automorphic lift. 
   \end{itemize} 
         
        Then, $H^{1}_{\mathrm{ct}}(\Gamma_{0},\Ad^{0}\bar{\rho}(1) \otimes \Omega)$ is free over $\Omega$ of rank $[F:\BQ]$.
        \end{prop}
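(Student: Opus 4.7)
The plan is to show that $M := H^{1}_{ct}(\Gamma_{0}, \Ad^{0}(\bar{\rho})(1) \otimes \Omega)$ is finitely generated and $T$-torsion free over the DVR $\Omega = k[[T]]$ (hence free), then pin down the rank by a leading-order count against $\dim_k H^{1}(\Gamma_{n}, \Ad^{0}\bar{\rho}(1))$. Shapiro's lemma gives $H^{i}(\Gamma_{0}, \Ad^{0}\bar{\rho}(1) \otimes \Omega_n) = H^{i}(\Gamma_n, \Ad^{0}\bar{\rho}(1))$, and since these are finite-dimensional the $\varprojlim^{1}$ term in Lemma \ref{com} vanishes; so $M = \varprojlim_{n} H^{1}(\Gamma_{n}, \Ad^{0}\bar{\rho}(1))$.

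Writing $\Omega_{\mathrm{tot}} := \Ad^{0}\bar{\rho}(1) \otimes \Omega$, I would tensor $0 \to \Omega \xrightarrow{T} \Omega \to k \to 0$ (a short exact sequence of $\Gamma_0$-modules) with $\Ad^{0}\bar{\rho}(1)$ and take continuous cohomology. Adequacy forces $(\Ad^{0}\bar{\rho}(1))^{\Gamma_n}=0$ at every level (using $\zeta_{p}\notin F_n$), so both $H^{0}(\Gamma_0, \Ad^0\bar{\rho}(1))$ and $H^{0}_{ct}(\Omega_{\mathrm{tot}}) = \varprojlim_n (\Ad^{0}\bar{\rho}(1))^{\Gamma_n}$ vanish, and the long exact sequence becomes
\[
0 \to M \xrightarrow{T} M \to H^{1}(\Gamma_{0}, \Ad^{0}\bar{\rho}(1)) \to H^{2}_{ct}(\Omega_{\mathrm{tot}}) \xrightarrow{T} H^{2}_{ct}(\Omega_{\mathrm{tot}}) \to H^{2}(\Gamma_0, \Ad^0\bar{\rho}(1)).
\]
Hence $M[T]=0$, and $M/TM$ embeds into the finite-dimensional $k$-space $H^{1}(\Gamma_{0},\Ad^{0}\bar{\rho}(1))$. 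Topological Nakayama then makes $M$ finitely generated over $\Omega$, and torsion-freeness over the DVR promotes this to freeness of some rank $r$.

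To fix $r = [F:\mathbb{Q}] = d$, I would replay the argument with $\omega_n = (1+T)^{p^n}-1 = T^{p^n}$ (in characteristic $p$), starting from $0 \to \Omega \xrightarrow{\omega_n}\Omega \to \Omega_n \to 0$, to obtain
\[
0 \to M/\omega_n M \to H^{1}(\Gamma_n, \Ad^{0}\bar{\rho}(1)) \to H^{2}_{ct}(\Omega_{\mathrm{tot}})[\omega_n] \to 0,
\]
so $rp^n = \dim_k H^{1}(\Gamma_n, \Ad^{0}\bar{\rho}(1)) - \dim_k H^{2}_{ct}(\Omega_{\mathrm{tot}})[\omega_n]$. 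The global Euler--Poincar\'e formula over $F_n$ gives $\dim_k H^{1}(\Gamma_n, \Ad^{0}\bar{\rho}(1)) = p^n d + h^0_n + h^2_n$, with leading term $p^n d$ arising from $\dim \Ad^0 = 3$ minus the $2$-dimensional fixed subspace at each of the $p^n d$ real places of $F_n$ (complex conjugation acts on $\Ad^{0}\bar{\rho}(1)$ with eigenvalues $(-,+,+)$ by (odd) and (det)). Adequacy kills $h^0_n$, and Corollary \ref{p-adic-WL-aut} applied to $W = \Ad^0(\rho)(1)$ --- whose Kummer dual is $\Ad^0(\rho)$ by self-duality of $\Ad^0$, so the required $\mu$-vanishing is precisely the second clause of $(\mu)$ --- bounds $h^2_n$ uniformly in $n$. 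From the previous long exact sequence, $H^{2}_{ct}(\Omega_{\mathrm{tot}})/T \hookrightarrow H^{2}(\Gamma_0, \Ad^0\bar{\rho}(1))$ is finite, so $H^{2}_{ct}(\Omega_{\mathrm{tot}})$ is a finitely generated torsion $\Omega$-module by Nakayama, and for such a module $\dim_k H^{2}_{ct}(\Omega_{\mathrm{tot}})[\omega_n]$ stabilises as $n \to \infty$. Matching leading coefficients in $p^n$ forces $r = d$. The delicate step is this last uniform $O(1)$ bound, which ultimately rests on the mod-$p$ weak Leopoldt statement of Corollary \ref{p-adic-WL-aut}, where (Aut), (ad$_{F(\zeta_p)}$), and the second clause of $(\mu)$ all enter in an essential way.
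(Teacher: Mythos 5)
Your proof is correct, and the freeness step takes a genuinely different route from the paper's. The paper proves freeness of $M := H^{1}_{ct}(\Gamma_{0}, \Ad^{0}\bar{\rho}(1)\otimes\Omega)$ by embedding it into the finitely generated free $\Omega$-module $\bigoplus_{\mathfrak{p}\mid p} H^{1}_{ct}(F_{\mathfrak{p}}, \Ad^{0}\bar{\rho}(1)\otimes\Omega)$ (free by Lemma \ref{sec:freeness}), establishing injectivity via Poitou--Tate duality and the weak-Leopoldt vanishing of $\varinjlim H^{2}(\Gamma_{n},\Ad^{0}\bar{\rho})$, and then invoking that a submodule of a finitely generated free module over the PID $\Omega$ is free. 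You instead argue intrinsically: the long exact sequence for multiplication by $T$ on $\Ad^{0}\bar{\rho}(1)\otimes\Omega$, combined with the vanishing of $H^{0}(\Gamma_{0},\Ad^{0}\bar{\rho}(1))$, yields $M[T]=0$, while $M/TM$ embeds in the finite-dimensional $H^{1}(\Gamma_{0},\Ad^{0}\bar{\rho}(1))$, so $M$ is finitely generated by topological Nakayama and hence free as a torsion-free finitely generated module over the DVR $\Omega=k[[T]]$. This route bypasses the local Lemma \ref{sec:freeness} and the global--local injectivity entirely for the freeness step, making it somewhat more self-contained; the paper's route, in exchange, gives the a priori bound $r\le 3[F:\BQ]$ from the local rank. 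The rank computation is essentially the same in both treatments (Euler--Poincar\'e, boundedness of $h^2_n$, and the sequence induced by $T^{p^n}$). Two small slips worth noting: Nakayama gives only that $H^{2}_{ct}(\Ad^{0}\bar{\rho}(1)\otimes\Omega)$ is finitely generated, not that it is torsion --- but this is harmless, since $\Omega$ is a domain so any free summand contributes nothing to $H^{2}_{ct}[T^{p^n}]$, and the stabilization you need still holds; and the boundedness of $h^{2}_{n}$ is Corollary \ref{mod p-WL-aut}, not Corollary \ref{p-adic-WL-aut}, though your Kummer-duality bookkeeping reducing the required $\mu$-vanishing to the second clause of $(\mu)$ is correct.
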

        \begin{proof}  We first show that $H^{1}_{\mathrm{ct}}(\Gamma_{0},\Ad^{0}\bar{\rho}(1) \otimes \Omega)$ is free as a $\Omega$-module.  It's enough to show that it's a $\Omega$-submodule of a free $\Omega$-module, since $\Omega$ is a PID. 
However,  the map      $$  H^{1}_{\mathrm{ct}}(\Gamma_{0},\Ad^{0}\bar{\rho}(1) \otimes \Omega) 
\rightarrow \bigoplus_{\fp|p}\underbrace{H^1_{\mathrm{ct}}(F_{\fp},  \Ad^{0}\bar{\rho}(1) \otimes \Omega)}_{\mbox{free by  Lemma \ref{sec:freeness} and \S\ref{ss:ct}}}$$
is injective:    Tate duality and Corollary \ref{mod p-WL-aut}
imply that  
$$\varprojlim \left( \mathrm{ker} \left( H^1(\Gamma_n, \Ad^{0}\bar{\rho}(1)) \rightarrow \bigoplus_{\fp|p} H^1(F_{n,\fp}, \Ad^{0}\bar{\rho}(1)) \right) \right)  = 0.$$
Indeed, the left hand side is dual to $\varinjlim \Sha^{2}(\Gamma_{n},\Ad^{0}\bar{\rho})$, which by Corollary \ref{mod p-WL-aut} vanishes as $\Gamma_{\infty}$ has cohomological dimension $1$.

 At this point we know that $H^{1}(\Gamma_{0},\Ad^{0}\bar{\rho}(1) \otimes \Omega)$ is   free of rank $r$, and must just show $r=[F:\BQ]$. 
   
  Note that, in view of the oddness of $\bar{\rho}$, the eigenvalues of complex conjugation
 on $\Ad^{0}\bar{\rho}$ are $-1, -1, +1$, and therefore the eigenvalues of complex conjugation on
 $\Ad^{0}\bar{\rho}(1)$ are $+1, +1, -1$. 
 By Tate's global Euler-Poincar\'{e} formula,  
  $$-\dim_{k} H^0(\Gamma_n, \Ad^{0}\bar{\rho}(1)) + \dim_{k} H^1(\Gamma_n, \Ad^{0}\bar{\rho}(1)) - \dim_{k} H^2(\Gamma_n, \Ad^{0}\bar{\rho}(1)) =   p^n [F:\BQ].$$  
 The first term is vanishing, and $\dim_{k} H^2(\Gamma_n, \Ad^{0}\bar{\rho}(1))$ remains bounded by Corollary \ref{mod p-WL-aut}.
 We conclude that there exists a constant $C$ such that  
\begin{equation} \label{cobound} \left| \dim_{k} H^1(\Gamma_n, \Ad^{0} \bar{\rho}(1))   -p^n [F:\BQ] \right| \leq C \end{equation} 

 As before we can identify $\Omega \simeq k[\![T]\!]$ in such a way
 that the quotient $k[\![T]\!]/(T^{p^n})$ is identified with the natural map $\Omega \rightarrow k[\Delta_n]$. Then 
from the sequence 
 $ 0 \rightarrow (T^{p^{n}}) \rightarrow k[\![T]\!] \rightarrow k[\![T]\!]/(T^{p^n}) \rightarrow 0$ we get
$$\underbrace{ H^1_{\mathrm{ct}}(\Gamma_0, \Ad^{0} \bar{\rho}(1) \otimes \Omega)/T^{p^n} }_{r p^n} \hookrightarrow \underbrace{  H^1_{\mathrm{ct}}(\Gamma_0, \Ad^{0} \bar{\rho}(1) \otimes k[\Delta_n])  }_{\approx [F:\BQ] p^n} \twoheadrightarrow
H^2_{\mathrm{ct}}(\Gamma_0, \Ad^{0}\bar{\rho}(1) \otimes \Omega)[T^{p^n}] $$
The final term is $\varprojlim H^2(\Gamma_m, \Ad^{0}\bar{\rho}(1))[T^{p^n}]$, and 
we saw in Corollary \ref{mod p-WL-aut} 
  that each term of the projective limit has dimension bounded above by $C$, thus the projective limit does too.

We conclude by comparing dimensions that $r= [F:\BQ]$.

        \end{proof} 
\begin{remark}
The freeness of $H^{1}_{\mathrm{ct}}(\Gamma_{0},\Ad^{0}\bar{\rho}(1) \otimes \Omega)$ as an $\Omega$-module
may be seen more directly: it's $\Omega$-torsion submodule is $(\Ad^{0}\ov{\rho}(1)\otimes \Omega)^{\Gamma_{\infty}}$ (cf.~\cite[p.~12]{PR}), which vanishes since $(\Ad^{0}\ov{\rho}(1))^{\Gamma_{0}}=0$ by our hypotheses.
\end{remark}
        
        We are ready for the main theorem:
 \begin{thm} \label{unob}
   Suppose that 
   \begin{itemize}
  \item[(Aut)] $\bar{\rho}$ is automorphic,   
  \item[(ad$_{F(\zeta_{p})}$)] $\bar{\rho}|_{G_{F(\zeta_{p})}}$ is adequate (\cite[Def. 3.1.1]{Al}) and 
  \item[($\mu$)] $\mu(X^{1}(F,T^{*}(1))_{\tor})=0=\mu(X^{1}(F,T)_{\tor})$ for $T$ arising from an automorphic lift. 
   \end{itemize} 
 Moreover, suppose that  $R_{\infty}$ is Noetherian.   Then  $\varinjlim H^2_{\ord}(\Gamma_n, \Ad^{0}\bar{\rho}) = 0$;
             in particular $$R_{\infty} \simeq W(k)[\![X_1, \dots, X_s]\!].$$ 
   \end{thm}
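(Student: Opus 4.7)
Plan:

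The plan is to prove $\varinjlim_n H^{2}_{\ord}(\Gamma_{n},\Ad^{0}\bar\rho)=0$. Combined with Noetherianity of $R_{\infty}$ and reducedness (Lemma~\ref{red}), this unobstructedness in the cyclotomic limit yields formal smoothness: $R_{\infty}\cong W(k)[[X_{1},\ldots,X_{s}]]$ with $s=\dim_{k}\varinjlim H^{1}_{\ord}(\Gamma_{n},\Ad^{0}\bar\rho)$, finite by Noetherianity.

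First, I would apply the nine-term CHT/Poitou--Tate sequence displayed in Section~3.1 at level $n$ and take $\varinjlim$. Corollary~\ref{p-adic-WL-aut} gives $\varinjlim H^{2}(\Gamma_{n},\Ad^{0}\bar\rho)=0$; Lemma~\ref{lc1} gives $H^{2}(K_{n,\fp},\Ad^{0}\bar\rho)=0$; cohomological dimension kills $H^{3}$. The limit sequence collapses to
$$0\to\varinjlim H^{1}_{\ord}\to\varinjlim H^{1}\to\varinjlim\bigoplus_{\fp}H^{1}(K_{n,\fp})/H^{1}_{\ord}\to\varinjlim H^{2}_{\ord}\to 0,$$
so it suffices to show the localisation map is surjective. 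Pontryagin dualising, using Tate local duality (which identifies the dual of $H^{1}/H^{1}_{\ord}$ with $\ker(H^{1}(K_{n,\fp},\Ad^{0}\bar\rho(1))\twoheadrightarrow H^{1}_{\ord,*})$) together with Lemma~\ref{com} and Shapiro, this becomes the injectivity of
$$\Phi\colon H^{1}_{ct}(\Gamma_{0},\Ad^{0}\bar\rho(1)\otimes\Omega)\longrightarrow\bigoplus_{\fp}\varprojlim_{n}H^{1}_{\ord,*}(K_{n,\fp},\Ad^{0}\bar\rho(1)).$$
By Proposition~\ref{free} the source is $\Omega^{[F:\BQ]}$; by Lemma~\ref{spld} the target is $\bigoplus_{\fp}(\Omega^{d_{\fp}}\oplus k)$, of $\Omega$-rank $\sum_{\fp}d_{\fp}=[F:\BQ]$. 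So $\ker\Phi$, being an $\Omega$-submodule of a free module over the PID $\Omega$, is free of some rank $r\le[F:\BQ]$, and the point is to force $r=0$.

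The main step is a uniform dimension bound. An Euler--Poincar\'e count on the nine-term sequence---using $h^{1}-h^{2}=2p^{n}[F:\BQ]$ for $\Ad^{0}\bar\rho$ (complex conjugation has eigenvalues $(-1,-1,+1)$ by (odd)), Lemma~\ref{lc1}'s formula $\dim H^{1}/H^{1}_{\ord}(K_{n,\fp})=2p^{n}d_{\fp}$, and absolute irreducibility ($h^{0}=0$)---yields $\dim H^{1}_{\ord}(\Gamma_{n})=\dim H^{2}_{\ord}(\Gamma_{n})$ at each $n$. Noetherianity of $R_{\infty}$ and injectivity of the transition maps (Lemma~\ref{sur}) imply $\dim H^{1}_{\ord}(\Gamma_{n})$ is bounded uniformly in $n$, hence so is $\dim H^{2}_{\ord}(\Gamma_{n})$. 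The nine-term sequence also gives a surjection $H^{2}_{\ord}(\Gamma_{n},\Ad^{0}\bar\rho)\twoheadrightarrow H^{2}(\Gamma_{n},\Ad^{0}\bar\rho)$ whose kernel is Pontryagin-dual to the dual Selmer group
$$H^{1}_{\ord^{*}}(F_{n},\Ad^{0}\bar\rho(1)) := \ker\Bigl(H^{1}(\Gamma_{n},\Ad^{0}\bar\rho(1))\to\bigoplus_{\fp}H^{1}_{\ord,*}(K_{n,\fp},\Ad^{0}\bar\rho(1))\Bigr),$$
whose inverse limit along corestriction is precisely $\ker\Phi$. Since $\dim H^{2}(\Gamma_{n},\Ad^{0}\bar\rho)$ is bounded by Corollary~\ref{mod p-WL-aut}, we obtain a uniform bound on $\dim H^{1}_{\ord^{*}}(F_{n},\Ad^{0}\bar\rho(1))$. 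A standard Iwasawa control argument---comparing $\ker\Phi/T^{p^{n}}$ with $H^{1}_{\ord^{*}}(F_{n},\Ad^{0}\bar\rho(1))$ up to uniformly bounded error (Hochschild--Serre for $\Gal(F_{\infty}/F_{n})$ acting on $\ker\Phi$)---then forces $r=0$, since otherwise $\dim H^{1}_{\ord^{*}}(F_{n})$ would grow at least like $rp^{n}-O(1)$. Torsion-freeness of $\Omega^{[F:\BQ]}$ upgrades $r=0$ to $\ker\Phi=0$, whence $\varinjlim H^{2}_{\ord}=0$ and the theorem follows.

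The main obstacle is the control-theoretic comparison in the last step: bounding uniformly in $n$ the error between the specialisation of the Iwasawa-theoretic kernel $\ker\Phi$ and the finite-level dual Selmer $H^{1}_{\ord^{*}}(F_{n},\Ad^{0}\bar\rho(1))$. This is where the ($\mu$)-hypothesis enters decisively (through the uniform boundedness of Corollary~\ref{mod p-WL-aut}), and where the Noetherian hypothesis on $R_{\infty}$ is indispensable in bounding $\dim H^{1}_{\ord}(\Gamma_{n})$.
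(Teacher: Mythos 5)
Your proposal follows the same broad skeleton as the paper's argument: dualise the vanishing of $\varinjlim H^{2}_{\ord}$ to the injectivity of the Iwasawa-theoretic localisation map $\Phi=\varphi$, identify the source as $\Omega^{[F:\BQ]}$ via Proposition~\ref{free}, describe the local target via Lemma~\ref{spld}, and use $R_{\infty}$ Noetherian together with torsion-freeness over the PID $\Omega$ to force $\ker\Phi=0$. The paper, however, concludes more economically: it observes directly that $\coker(\varphi)\hookrightarrow\varprojlim H^{2}_{\ord,*,n}$, that $H^{2}_{\ord,*,n}\simeq (H^{1}_{\ord,n})^{*}$ by Greenberg--Wiles duality, and that $\varprojlim(H^{1}_{\ord,n})^{*}=(\varinjlim H^{1}_{\ord,n})^{*}$ is finite because $\varinjlim H^{1}_{\ord,n}$ is the reduced tangent space of the Noetherian $R_{\infty}$. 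Then a map between $\Omega$-modules of equal free rank $[F:\BQ]$ with finite cokernel is injective. Your Euler--Poincar\'e observation that $h^{1}_{\ord}(\Gamma_{n})=h^{2}_{\ord}(\Gamma_{n})$ is correct (it follows from the nine-term sequence with $H^{2}(K_{n,\fp})=H^{3}=H^{3}_{\ord}=0$ and the local/global EP formulas) and provides a pleasant conceptual reason why Noetherianity bounds the obstruction space, but it is a detour the paper does not take.

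Two places in your concluding step need repair. First, the kernel of the surjection $H^{2}_{\ord}(\Gamma_{n},\Ad^{0}\bar\rho)\twoheadrightarrow H^{2}(\Gamma_{n},\Ad^{0}\bar\rho)$ is \emph{not} Pontryagin-dual to the dual Selmer group $H^{1}_{\ord^{*}}(F_{n},\Ad^{0}\bar\rho(1))$; it is the \emph{whole} group $H^{2}_{\ord}(\Gamma_{n},\Ad^{0}\bar\rho)$ that is dual to $H^{1}_{\ord^{*}}(F_{n},\Ad^{0}\bar\rho(1))$, the kernel being only the cokernel of the localisation map, hence dual to a quotient. Relatedly, you deduce boundedness of that kernel "since $\dim H^{2}(\Gamma_{n})$ is bounded by Corollary~\ref{mod p-WL-aut}"; this is a non sequitur (a bound on the target of a surjection does not bound the kernel). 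What you actually need, and have already established via the EP count and Noetherianity, is a bound on $\dim H^{2}_{\ord}(\Gamma_{n})$; this then bounds $\dim H^{1}_{\ord^{*}}(F_{n})$ directly by the (corrected) duality. Second, the closing "Iwasawa control argument" comparing $\ker\Phi/T^{p^{n}}$ with $H^{1}_{\ord^{*}}(F_{n})$ up to bounded error is both unjustified (such a control statement for the dual Selmer condition here is not a standard fact and would require proof) and unnecessary: since $\ker\Phi=\varprojlim_{n}H^{1}_{\ord^{*}}(F_{n})$ exactly, with each term of uniformly bounded $k$-dimension, Mittag--Leffler gives $\dim_{k}\ker\Phi<\infty$, and then torsion-freeness of $H^{1}_{ct}(\Gamma_{0},\Ad^{0}\bar\rho(1)\otimes\Omega)$ (Proposition~\ref{free}) forces $\ker\Phi=0$. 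With these two repairs your route is valid, though longer than the paper's.
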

    \begin{proof}  To verify smoothness it is enough to check that a map $R_n \rightarrow A$ lifts to an infinitesimal extension
   $\tilde{A} \rightarrow A$ possibly after pullback via $R_m \rightarrow R_n$ for some $m > n$. 
   Equivalently, it is enough to verify  the vanishing of 
   $$\varinjlim H^2_{\ord}(\Gamma_n, \Ad^{0} \bar{\rho})$$

       By a duality argument, we have
       $$H^2_{\ord,n} \mbox{ is dual to }
       \ker(H^1(\Gamma_n, \Ad^{0} \bar{\rho}(1))  \rightarrow \bigoplus_{\fp|p} H^1(F_{n,\fp}, \Ad^{0} \bar{\rho}(1))/H^1_{\ord,*,n} )$$
    Moreover,   restriction maps for $H^2_{\ord, n}$ are identified with corestriction maps under the duality.    
    
    It remains to check that  
   $$\varprojlim_n \ker(H^1(\Gamma_n, \Ad^{0} \bar{\rho}(1))  \rightarrow \bigoplus_{\fp|p} H^1(F_{n,\fp}, \Ad^{0} \bar{\rho}(1))/H^1_{\ord,*,n} )$$
   (projective limit with respect to corestriction maps)   
   vanishes. Applying Shapiro's lemma as before (\S\ref{ss:ct}),  and noting that all the involved modules are finite and we can therefore
   commute cohomology and inverse limits (Mittag--Leffler) this is equivalent to checking  the injectivity of   
\begin{equation} \label{Cent} \underbrace{ H^1_{\mathrm{ct}}(\Gamma_0, \Ad^{0}(\bar{\rho})(1)\otimes \Omega )}_{\simeq \Omega^{[F:\BQ]} } \stackrel{\varphi}{ \rightarrow}  \bigoplus_{\fp|p}\underbrace{ H^1_{\mathrm{ct}}(F_{\fp}, \Ad^{0} \bar{\rho}(1) \otimes \Lambda)/H^1_{\ord, *}}_{k \oplus \Omega^{[F_{\fp}:\BQ_{p}]}} \end{equation}  
where we used the results of Proposition \ref{free}, Lemma \ref{spld}. 
    
    We will show that 
 $$ \mbox{$R_{\infty}$ Noetherian $\implies  \dim_k \mathrm{coker}(\varphi) < \infty$}$$
 which implies $\varphi$ is injective.     
 
   Now the cokernel of $\varphi$ is  
$$   \varprojlim_n   \mathrm{coker}(H^1(\Gamma_n, \Ad^{0} \bar{\rho}(1))  \rightarrow \bigoplus_{\fp|p}H^1(F_{n,\fp}, \Ad^{0} \bar{\rho}(1))/H^1_{\ord,*,n} ) $$
and  thus we deduce
$$\mathrm{coker}(\varphi)  \hookrightarrow \varprojlim H^2_{\ord,*,n} 
$$ 
where the group $H^2_{\ord,*,n}$ is defined  as in \S\ref{ss:tg}.

From Tate global duality $H^{2}_{\ord,*,n}\simeq (H^{1}_{\ord,n})^*$. Recall that 
$$\varinjlim H^1_{\ord}(\Gamma_n, \Ad^{0} \bar{\rho})$$
is isomorphic to the tangent space of $R_{\infty}$ : indeed, $ H^1_{\ord}(\Gamma_n, \Ad^{0} \bar{\rho})=\Hom(R_n, k[\varepsilon])$ and the tangent space of $R_{\infty}$, $\Hom(R_{\infty}, k[\varepsilon])$ is then the injective limit. 
In particular, it is finite-dimensional if $R_{\infty}$ is Noetherian.

We thus obtain
 $$\dim_{k} \mathrm{coker}(\varphi)  \leq \dim_k \varinjlim H^1_{\ord}(\Gamma_n, \Ad^{0} \bar{\rho})^{*}.$$

This concludes our argument.    
    
    \end{proof}

\end{document}